\newcommand{\mathbbm}[1]{\text{\usefont{U}{bbm}{m}{n}#1}} 
\newtheorem{defn}{Definition}
\newtheorem{thm}{Theorem}
\newtheorem{prop}{Proposition}
\newtheorem{lem}{Lemma}
\newtheorem{rem}{Remark}
\newtheorem{cor}{Corollary}
\newtheorem{property}{Property}
\newcommand{\mcl}[1]{\mathcal{ #1}}
\newcommand{\mbb}[1]{\mathbb{ #1}}
\let\bbl\Bigl
\let\bbbl\biggl
\let\bbr\Bigr
\let\bbbr\biggr
\newcommand{\R}{\mathbb{R}}
\newcommand{\norm}[1]{\lVert{#1}\rVert}
\newcommand{\eps}{\varepsilon}
\title{\LARGE \bf
	A Dynamic Programming Approach to Evaluating Multivariate Gaussian Probabilities
}
\author{Morgan Jones%
	\thanks{M. Jones is with the School for the Engineering of Matter, Transport and Energy, Arizona State University, Tempe, AZ, 85298 USA. e-mail: {\tt \small morgan.c.jones@asu.edu } },
	Matthew M. Peet
	\thanks{M. Peet is with the School for the Engineering of Matter, Transport and Energy, Arizona State University, Tempe, AZ, 85298 USA. e-mail: {\tt \small mpeet@asu.edu } }
}
\begin{document}

		\maketitle
		\thispagestyle{empty}
		\pagestyle{empty}
\begin{abstract}
We propose a method of approximating multivariate Gaussian probabilities using dynamic programming. We show that solving the optimization problem associated with a class of discrete-time finite horizon Markov decision processes with non-Lipschitz cost functions is equivalent to integrating a Gaussian functions over polytopes. An approximation scheme for this class of MDP's is proposed and explicit error bounds under the supremum norm for the optimal cost to go functions are derived.
\end{abstract}

\section{Introduction}

Integration of a Gaussian function over a polytope is a central computational bottleneck in several control and optimization problems, including machine learning~\cite{Liao_2007}, chance constrained optimization~\cite{Hessem_2002,Hessem_2001,Prekopa_1978}, and statistical modeling~\cite{Bock_1996}. While this problem is known to be a computationally challenging problem~\cite{Cunningham_2011}, in this paper, we show that it can be reformulated as an optimization problem associated with a Markov Decision Process (MPD). Next, we show that MDPs of this form can be uniformly approximated by MDPs with countable state space. Finally, we show that this sequence of approximated MDPs can be efficiently solved using a variation of Belman's equation. The solution is then demonstrated in several numerical examples.


Many methods for integrating a Gaussian function over a polytope have emerged in the literature. Genz \cite{Genz_1992} represents the state of the art, where the algorithm makes a series of transformations to reduce integration over a hyper-rectangle to integration over a unit cube. Here lattice point numerical integration can be used and explicit error bounds can be achieved. However Genz only looks at the specific case where the integration is over a rectangle. Several other algorithms of Gaussian integration over rectangles can be found in~\cite{Gassmann_2002}. Another approach is to use bounding methods where the polytope is inner approximated by closed and bounded sectors such as in \cite{Hanebeck_2015}, but no error bound can be found here. A common approach is to use expectation propagation but as seen in \cite{Cunningham_2011} this method performs badly on anything that is more complicated than a rectangular integration region. An alternative method is to use probabilistic methods where confidence intervals can be provided instead of error bounds~\cite{Vijverberg_1997}. In this paper we propose an integration algorithm over a possibly non-compact general polytope with explicit error bounds.

In Section \ref{Section 2: Multi-variable normal integration over polytopes} we show the equivalence of solving the optimization problem associated with a class of MDP's and evaluating Gaussian probabilities. MDP's describe the mathematical framework for modeling discrete time evolving processes involving a decision making situation coupled with partly random outcomes. Each MDP has an associated optimization problem of picking the sequence of decisions that minimizes the total expected cost of the process. MDP's appear in a vast number of fields such as economics, computer science, engineering etc; an in depth list of application of MDP's can be found in the survey \cite{Aristotle_1993}.

MDP's are commonly solved using dynamic programing \cite{Bellman}. Unfortunately in practice it is rare to be able find an analytical solution to Bellman's equation and thus the problem must be solved numerically, see \cite{Jones_2017} as an example. In this paper we are interested in MDP's where the state and control spaces can be uncountable (for example $[0,1]$). In these cases for an algorithm to solve the problem it becomes necessary to approximate the MDP by discretization; that is we replace the state and control spaces with a countable set. One hopes there is sufficient continuity in the original MDP such that as the discretization sharpness increases a solution can be found arbitrarily close to the true solution.

In the literature there has been much work done on deriving error bounds for discretization approximations of MDP's with compact control and state spaces and Lipschitz cost functions~\cite{Bertsekas_1974,Hernandez_Lerma_1989}. However in many practical problems the state dynamics are of the form $x(t+1)=Ax(t) + B \epsilon(t)$ where $\epsilon \sim \mathcal{N}(0,1)$, inducing the non-compact state space of $x \in\mathbb{R}^n$. A major contribution was made in \cite{F_Dufour_2012} where a discretization scheme was proposed and error bounds were proved for a general class of MDP's with locally compact state and control spaces. In this paper we modify and extend the work of \cite{F_Dufour_2012} to the case when the terminal cost function of the MDP is non-Lipschitz. The discretization scheme we propose is to first approximate the cost function by a Lipschitz continuous function and then to use the discretization scheme from \cite{F_Dufour_2012}.

The rest of this paper is organized as follows. In section \ref{Section 2: Multi-variable normal integration over polytopes} we show the relation of MDP's and integrating Gaussian random variables over polytopes. In section \ref{section 3: Markov decision processes} we introduce the class of MDP's we are interested in approximating. In Section \ref{section 4: Approximating MDP's} we show how to approximate this class of MDP's. In Section \ref{Section 5: Numerical Results} we present our numerical results and in \ref{conclusion} we finish with our conclusion.

\section{Notation}
For a matrix $A \in \R^{m,n}$ we denote the j'th column of $A$ by $ A_{\cdot,j}=[a_{1,j}, \dots, a_{m,j}]^T$.

For $A \in \R^{m \times n}$ we define $||A||_{\text{max}}= \max \{ |a_{i,j}| \}$.


We define the power set of a set $S$ to be the set of all subsets denoted by $S^{\cup}:=\{U : U \subseteq S\}$.

For $T \in \mbb N$ we denote the set $[T]=\{0,...,T\}$.

We define the positive scalars as $\R^{++} :=\{x \in \R\,:\,x >0\}$.

For functions $f_1:X \to \mbb R$ and $f_2:X \to \mbb R$ we denote $f_1(x) \vee f_2(x) := \max\{f_1(x),f_2(x)\}$.

We denote the Hausdorff metric space in $\R^n$ as $D^n$ with metric $d_H$, which is the set of non-empty subsets of $\R^n$ where if $X,Y\in D^n$, then $d_H(X,Y)=\max\{\sup_{\{x \in X\}} \inf_{\{y \in Y\}} ||x -y||_2, \sup_{\{y \in Y\}} \inf_{\{x \in X\}} ||x-y||_2\}$.

The function $f: \mathbb{R}^n \to \mathbb{R}^m$ is said to be Lipschitz continuous if there exists $L>0$ such that :
		\begin{equation}
		||f(x_1)-f(x_2)||_2 \le L ||x_1-x_2||_2\qquad \text{for all } x_1,\, x_2 \in X\label{eqn:Lip}
		\end{equation}
For a Lipschitz continuous function $f:\mathbb{R}^n \to \mathbb{R}^m$, we denote by $L_{f}$ the smallest constant $L$ such that Equation~\eqref{eqn:Lip} holds.

For bounded function on $X$, we denote the infinity norm as $\norm{h}_\infty := \sup_{x \in X} |h(x)|$.

For a given weighting function $w: \R^n \rightarrow \R^{++}$, we also define the weighted infinity norm
$||v||_{w} := \sup_{x \in \R^n} \{ \frac{|v(x)|}{w(x)}\}$ and $v\in \mathbb{L}_w(X)$ to be the space of Lipschitz continuous functions with finite $||v||_{w}$.

	


We denote $\mcl B(X)$ to be the Borel sigma algebra of some set X.

Consider a probability space $(\Omega, \mcl F, \mbb P)$. We say $Z: \Omega \to \R$ is a real valued random variable if it is a $\mcl F$-measurable function. For any $B \in \mcl F$ we denote the law of $Z$ by $\mbb{P}_{Z}( B):= \mbb{P}(\{w : Z(w) \in B\})$. For a Borel measurable function $g: \R \to \R$ we define the expectation as $\mbb E_{Z}[g(Z)]:=\int_{\R} g d \mbb{P}_Z$. Furthermore we say $Z \sim \mathcal{N}(\mu,\Sigma)$, $\mu \in \mathbb{R}^n$ and $\Sigma \in \mathbb{R}^{n \times n}$ if $\mbb{P}_{Z}(B)= \int_B \phi(x) dx$ where \small{$\phi:\R^n \rightarrow [0,1]$ is given by $\phi({x})=\dfrac{1}{\sqrt{(2 \pi)^n \det(\Sigma)}} \exp\left(\dfrac{1}{2}({x} - \mu )^{T} \Sigma^{-1}({x} - \mu ) \right).$}\normalsize

For any subset $X \subset Z$, we define the indicator function $\mathbbm{1}_{X}: Z \rightarrow \{0,1\}$ as
\[
\mathbbm{1}_{X}(x)=\begin{cases}
                     1, & \mbox{if } x \in X \\
                     0, & \mbox{otherwise}.
                   \end{cases}
\]

In Section \ref{section 4: Approximating MDP's}, we will make use of a parameterized smoothed indicator function $g_{\lambda,b}(x):\R^m \rightarrow [0,1]$ which is defined for any $b\in \R^m$ and $\lambda>0$ as $g_{\lambda,b}(x):= \Pi_{i=1}^{m}g_{i}(x)$ where
\small{\begin{equation}\label{eqn: g}
g_{i}(x):= \begin{cases} 1, & \quad \text{if} \quad  x_i <b_i - \dfrac{1}{\lambda} \\
-\lambda(x_i - b_i), & \quad \text{if} \quad b_i - \dfrac{1}{\lambda} < x_i <b_i \\
0 , & \quad \text{if} \quad   b_i<x_i. \end{cases} \\ \quad
\end{equation} } \normalsize

Associated with $g_{\lambda,b}$, we define the region of smoothing $\mathcal{R} \subset \R^n$ as
	\begin{align*}
	\mathcal{R}_{\lambda,b} & =\{x \in \R^n : g_{\lambda,b}(x)\neq \mathbbm{1}_{\{x\in \R^n\;:\;x<b\}}(x)  \}
	\end{align*}

	Suppose $(X,D)$ is a compact metric space. We say the set $\Gamma_\beta=\{x_1,...,x_n\}$ is an $\beta$-partition of $X$ if:
	\begin{itemize}
		\item There exists disjoint subsets, $X_1,...,X_n$, of $X$ such that $\cup_{i=1}^{n}X_i=X$ and $x_i \in X_i$ for $i \in \{1,...,n\}$.
		\item $D(x,x_i) \le \beta$ for all $x \in X_i$.
	\end{itemize}
	Furthermore given a partition $\Gamma_\beta=\{x_1,...,x_n\}$ of some space $X$, we define $p_{X,\Gamma_\beta} : X \to \Gamma_\beta$ as $p_{X,\Gamma_\beta} (x)=x_i$ for every $x \in X_i$.

\section{Multi-variable Gaussian integration over polytopes can be written as a Dynamic Programing Problem} \label{Section 2: Multi-variable normal integration over polytopes}
Our aim is to compute:
\begin{equation} \label{eqn: polytope integral}
\mbb P_Z({Z} \in \mathcal{P})= \int_{ {x} \in \mathcal{P}}\phi( {x}) d {x}
\end{equation}
Where ${Z} \sim \mathcal{N}(\mu,\Sigma)$, $\mu \in \mathbb{R}^n$, $\Sigma \in \mathbb{R}^{n \times n}$ $\mathcal{P}=\{ {x} \in \mathbb{R}^n : A {x} \le b\}$, $A \in \mathbb{R}^{m \times n}$ and $b \in \mathbb{R}^m$.
\begin{rem}
	For any $\Sigma>0$, there exists an invertible $C \in \mathbb{R}^{n \times n}$ such that $ \Sigma = CC^T$ and under the transformation ${N}=C^{-1}({Z}-\mu)$ we see ${N} \sim \mathcal{N}( {0}, {I})$. Thus $\mbb P_Z( {Z} \in \mathcal{P})= \mbb P_N( {N} \in \mathcal{P}')$ where $\mathcal{P}'=\{ {x} \in \mathbb{R}^n : AC {x} \le b-A \mu\}=\{ {x} \in \mathbb{R}^n : A' {x} \le b'\}$. Therefore without loss of generality we can assume $\mu = {0}$ and $\Sigma ={I}$ for the rest of this paper.
\end{rem}
\begin{lem} \label{lem: Wlog last column is all non-zero}
	For every polytope $\mathcal{P} \subset \mathbb{R}^n$ there exists $A \in \R^{m \times n}$ and $b \in \mathbb{R}^m$ such that $\mathcal{P}=\{x \in \R^n : Ax \le b\}$ with $a_{i,n} \ne 0$ $\forall i \in \{1,\dots,m\}$.
\end{lem}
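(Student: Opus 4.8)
The plan is to start from an arbitrary $H$-representation $\mathcal{P}=\{x\in\R^n:Ax\le b\}$, which exists by the definition of a polytope, and to rewrite it row by row so that every row acquires a nonzero entry in column $n$ while keeping the intersection equal to $\mathcal{P}$. I would split the rows into the \emph{offending} set $I_0:=\{i:a_{i,n}=0\}$ and the \emph{good} set $I_1:=\{i:a_{i,n}\neq 0\}$. The feature to exploit is that an $H$-representation is far from unique: rows may be deleted, duplicated, and replaced by nonnegative combinations of themselves with other valid inequalities without changing the feasible set, provided each original constraint stays implied. Thus the entire difficulty concentrates on the rows of $I_0$, and the first step is to reduce the lemma to eliminating a single offending row.

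The main device I would try is a \emph{tilt}. Fix one good row $j\in I_1$. For $i\in I_0$ and $\epsilon>0$, replace the row $a_i^Tx\le b_i$ by the nonnegative combination $(a_i+\epsilon a_j)^Tx\le b_i+\epsilon b_j$, whose $n$-th coefficient is $\epsilon\,a_{j,n}\neq 0$. One inclusion is immediate and I would establish it first: if $x\in\mathcal{P}$ then $a_i^Tx\le b_i$ and $a_j^Tx\le b_j$, so $(a_i+\epsilon a_j)^Tx\le b_i+\epsilon b_j$, whence $x$ satisfies the tilted row; keeping all other (now all good) rows unchanged, this shows $\mathcal{P}$ is contained in the tilted system, together with the bookkeeping that every row of the new system lies in $I_1$.

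The step I expect to be the genuine obstacle is the \emph{reverse} inclusion, that is, recovering $a_i^Tx\le b_i$ exactly. From the tilted row one obtains only $a_i^Tx\le b_i+\epsilon(b_j-a_j^Tx)$, and the slack $b_j-a_j^Tx$ is strictly positive on most of the face $\{a_i^Tx=b_i\}\cap\mathcal{P}$, so the tilted system is a strict \emph{superset} of $\mathcal{P}$. This is not an artifact of the method. If the offending row is facet-defining with outer normal orthogonal to $e_n$ (for instance a side of an axis-aligned box, or, in the extreme case $I_1=\varnothing$, any wall of a cylinder $\mathcal{Q}\times\R$ along $e_n$), then the supporting hyperplane of that facet is the unique hyperplane through an $(n-1)$-dimensional face, so \emph{every} $H$-representation of $\mathcal{P}$ must carry a row with $a_{i,n}=0$. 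Hence exact set-equality cannot be achieved by tilting, nor by any other choice of representation, for such $\mathcal{P}$.

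Accordingly I would split the conclusion. Under the nondegeneracy hypothesis that no facet of $\mathcal{P}$ is parallel to the $x_n$-axis, the minimal facet representation already satisfies $a_{i,n}\neq 0$ for all $i$, and the lemma is immediate once redundant rows are discarded. For the general case, which is all that the statement is used for downstream, namely evaluating the probability \eqref{eqn: polytope integral}, I would keep $\epsilon$ as a parameter: treating each offending row in turn yields tilted systems $\{A_\epsilon x\le b_\epsilon\}$ with nonzero last column and $\mathcal{P}\subseteq\{A_\epsilon x\le b_\epsilon\}$, decreasing monotonically to $\mathcal{P}$ as $\epsilon\downarrow 0$ off the measure-zero offending facets; since $\phi$ is integrable, dominated convergence gives $\mbb P_Z(Z\in\{A_\epsilon x\le b_\epsilon\})\to\mbb P_Z(Z\in\mathcal{P})$. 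I would flag the exact set-equality in the presence of $e_n$-parallel facets as the point most in need of either an explicit nondegeneracy assumption on $\mathcal{P}$ or this limiting reading.
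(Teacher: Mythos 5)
Your proposal is correct in substance, and what it uncovers is not a gap in your reasoning but a genuine error in the paper: Lemma~\ref{lem: Wlog last column is all non-zero} is false as stated, for exactly the reason you identify. Your box example can be made airtight in one line: if $[0,1]^2=\{x\in\R^2:Ax\le b\}$, some row must be active at the boundary point $(0,\tfrac{1}{2})$; since $(0,\tfrac{1}{2})$ is the midpoint of $(0,0)$ and $(0,1)$, both in the set, that row satisfies $a_i^T(0,0)=a_i^T(0,1)=b_i$, which forces $a_{i,2}=0$. The paper's own proof fails at the claimed equivalence ``$A'x \le b' \iff TA'x \le Tb'$ for any elementwise-nonnegative invertible $T$'': only the forward implication holds, and the reverse would require $T^{-1}$ to be elementwise nonnegative as well, which for invertible nonnegative matrices happens only when $T$ is a monomial matrix (a permutation composed with positive scalings), and such a $T$ cannot remove zeros from a column. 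The paper's specific $T$ has inverse entries equal to $-2$, and running its construction on the unit square produces the system $\tfrac{1}{2}x_1-x_2\le\tfrac{1}{2}$, $-\tfrac{1}{2}x_1-x_2\le 0$, $-\tfrac{1}{2}x_2\le\tfrac{1}{2}$, $-x_2\le 0$, which contains the ray $\{(t,t):t\ge 0\}$ and is therefore a strict, indeed unbounded, superset of $[0,1]^2$. This is precisely the one-sided ``tilt'' inclusion you predicted: each row of $TA'x\le Tb'$ is a nonnegative combination of valid rows, so it can only enlarge the feasible set.

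Your proposed repairs are also the right ones for how the lemma is consumed downstream, where $\min_i |a_{i,T}|$ appears in denominators (Lemma~\ref{lem: Bound_of_transition_to_bad_region}, Corollary~\ref{cor:error bound for polytope DP}). Either impose the nondegeneracy hypothesis that no facet normal of $\mathcal{P}$ is orthogonal to $e_n$ (then the irredundant facet representation already works), or use your outer approximations: your monotonicity claim is in fact correct, because the good row $j$ is retained in the tilted system, so $\mathcal{P}_{\epsilon'}\subseteq\mathcal{P}_{\epsilon}$ for $\epsilon'\le\epsilon$ and $\bigcap_{\epsilon>0}\mathcal{P}_\epsilon=\mathcal{P}$, whence dominated convergence gives $\mbb P_Z(Z\in\mathcal{P}_\epsilon)\to\mbb P_Z(Z\in\mathcal{P})$. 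Two caveats are worth recording. First, when $I_1=\varnothing$ no tilt exists (every valid inequality for a cylinder $\mathcal{Q}\times\R$ must have last coefficient zero), so that case must be handled by the dimension reduction the paper alludes to parenthetically. Second, an alternative repair that preserves exact set equality for the application \eqref{eqn: polytope integral} is to exploit rotation invariance of $Z\sim\mcl N(0,I)$: replace $\mathcal{P}$ by $R\mathcal{P}$ for a generic rotation $R$, i.e.\ replace $A$ by $AR^T$, after which all rows have nonzero last entry and the paper's error bounds apply verbatim; this fixes the downstream results without any limiting argument, though it turns the lemma into a statement about the pair (polytope, Gaussian) rather than the polytope alone.
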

\begin{proof}
	Since $\mathcal{P}$ is a polytope there exists some $A' \in \R^{m \times n}$ and $b' \in \mathbb{R}^m$ such that $\mathcal{P}=\{x \in \R^n : A'x \le b'\}$. Now $A'x \le b'$ $\iff$ $TA'x \le Tb'$ for any elementwise-nonnegative invertible $T\in \R^{m \times m}$. WLOG we assume there is at least one nonnegative element in the last column of $A'$ (otherwise we can restrict the space to $\R^{n-1}$) and by relabeling coordinates we assume $a'_{m,n} \ne 0$. Consider the matrix \small{$T = \begin{bmatrix}
	& \frac{1}{2} & 0 & \dots & 0 & ||A'||_{\text{max}} / |a'_{m,n}| \\
	& 0     & \frac{1}{2} & \dots & 0 & ||A'||_{\text{max}}/ |a'_{m,n}|  \\
	& \vdots & \dots & \ddots & \vdots & \vdots \\
	& 0 & 0 & \dots & \frac{1}{2} & ||A'||_{\text{max}}/ |a'_{m,n}| \\
	& 0 & 0 & \dots & 0 & ||A'||_{\text{max}}/ |a'_{m,n}|
	\end{bmatrix}.$} \normalsize Clearly $T$ has all nonnegative elements and is invertible as all of its columns are independent. It follows $\mathcal{P}= \{ x \in \mathbb{R}^n : TA'x<Tb'\}$ where $TA'_{\cdot,n} = [\frac{a_{1,n}}{2} + sign(a_{m,n})||A'||_{\text{max}},...,\frac{a_{m-1,n}}{2} + sign(a_{m,n})||A'||_{\text{max}},sign(a_{m,n})||A'||_{\text{max}}  ]^{T}$ which clearly has no nonzero elements since $||A'||_{\text{max}} > a_{i,n}$ for $ 1 \le i \le m$.
	\end{proof}
We consider the Dynamic Programing (DP) problem:
\begin{align} \label{Simple_DP}
& J=\mathbb{E}_{x_{T}}\{\mathbbm{1}_{\{  {x} \le b\}}(x_T)\} \quad \text{Subject to:}\\ \nonumber
& x_{i,t+1}= x_{i,t} + a_{i,t+1} \epsilon_{t} \quad t\in [T-1] , \quad i \in \{1,..,m\}\\ \nonumber
& x_{i,0}=0 \quad i \in \{1,..,m\}\\ \nonumber
& \epsilon_{t} \sim \mathcal{N}(0,1) \quad t\in [T-1] \nonumber
\end{align}

\begin{prop} \label{prop: MDP = polytope integral}
	The objective function, $J$, defined in \eqref{Simple_DP} is equal to $\int_{{x} \in \mathcal{P}}\phi({x}) d{x}$, where  $\mathcal{P}=\{ {x} \in \mathbb{R}^T : A {x} \le b\}$ and $A=\{a_{i,t}\} \in \mathbb{R}^{m \times T}$.
\end{prop}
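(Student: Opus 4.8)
The plan is to solve the linear recursion in \eqref{Simple_DP} explicitly, thereby expressing the terminal state $x_T$ as a single linear image of the vector of driving noises, and then to recognize the resulting expectation as the desired Gaussian integral. First I would unroll the dynamics: starting from $x_{i,0}=0$ and applying $x_{i,t+1}=x_{i,t}+a_{i,t+1}\epsilon_t$ for $t\in[T-1]$ gives, by a one-line induction on $t$,
\begin{equation*}
x_{i,T}=\sum_{s=1}^{T}a_{i,s}\,\epsilon_{s-1},\qquad i\in\{1,\dots,m\}.
\end{equation*}
Collecting the $T$ noises into the vector $\epsilon:=[\epsilon_0,\dots,\epsilon_{T-1}]^T\in\R^T$ and recalling $A=\{a_{i,t}\}\in\R^{m\times T}$, this reads $x_T=A\epsilon$ in matrix form, so the terminal state is a deterministic linear function of $\epsilon$.

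Next I would identify the law of $\epsilon$. Since the $\epsilon_t$ are independent and each $\epsilon_t\sim\mathcal{N}(0,1)$, their joint density is the product of the one-dimensional standard densities, which is exactly the $T$-dimensional standard Gaussian density $\phi$ appearing in the statement (with $\mu=0$, $\Sigma=I$, as we assume throughout by the preceding Remark). The objective can therefore be pushed back onto $\epsilon$, using that $\mathbbm{1}_{\{x\le b\}}(x_T)=1$ iff $A\epsilon\le b$ componentwise:
\begin{equation*}
J=\mathbb{E}_{x_T}\{\mathbbm{1}_{\{x\le b\}}(x_T)\}=\mathbb{E}_{\epsilon}\big[\mathbbm{1}_{\{A\epsilon\le b\}}\big]=\mathbb{P}(A\epsilon\le b).
\end{equation*}

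Finally I would evaluate this probability as an integral against $\phi$. Because $\epsilon$ has density $\phi$ on $\R^T$, the expectation of the indicator equals the $\phi$-measure of the event, so
\begin{equation*}
J=\int_{\R^T}\mathbbm{1}_{\{Ay\le b\}}\,\phi(y)\,dy=\int_{\{y\in\R^T\,:\,Ay\le b\}}\phi(y)\,dy=\int_{x\in\mathcal{P}}\phi(x)\,dx,
\end{equation*}
which is the claimed identity. The argument is essentially bookkeeping, and the only point requiring genuine care is the index alignment in the unrolling step: one must track the offset between the update time $t$, the coefficient index $t+1$, and the noise index $t$ so that the $s$-th column of $A$ is correctly paired with $\epsilon_{s-1}$. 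Once $x_T=A\epsilon$ is established, the remaining steps are the standard change of viewpoint from an expectation over the pushed-forward variable $x_T$ to an integral over the underlying Gaussian vector $\epsilon$, together with the identification of the event $\{A\epsilon\le b\}$ with the polytope $\mathcal{P}$.
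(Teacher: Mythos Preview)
Your proof is correct and follows essentially the same approach as the paper's: unroll the linear recursion to obtain $x_T=A\epsilon$ with $\epsilon=(\epsilon_0,\dots,\epsilon_{T-1})^T\sim\mathcal{N}(0,I_T)$, then rewrite the expectation of the indicator as $\mathbb{P}(A\epsilon\le b)=\int_{\mathcal{P}}\phi(x)\,dx$. The only cosmetic difference is that you perform the unrolling componentwise before assembling into matrix form, whereas the paper works directly with the vector recursion $x_{t+1}=x_t+A_{\cdot,t+1}\epsilon_t$.
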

\begin{proof}
	Let us denote ${x_{t}}=(x_{1,t},...,x_{m,t})^T$. From the second line in \eqref{Simple_DP} we see ${x_{t+1}}={x_{t}} + {A}_{\cdot,t+1} \epsilon_{t}$. Thus:
	\small{\begin{align*}
	{x_{T}} & ={x_{T-1}}+{A}_{\cdot , T} \epsilon_{T-1}
	={x_{T-2}}+ {A}_{\cdot,T-1} \epsilon_{T-2} + {A}_{\cdot,T} \epsilon_{T-1}  \\
	& \cdots \cdots= {A}_{\cdot, 1} \epsilon_{0} +....+ {A}_{\cdot,T} \epsilon_{T-1}=A {z}
	\end{align*} } \normalsize
	Where ${z}=(\epsilon_{0},....,\epsilon_{T-1})$ and thus ${z} \sim \mathcal{N}( {0}, {I}_{T \times T})$.
	Now  considering the objective function in (2):
	\small{
	\begin{align*}
	J & = \mathbb{E}_{x_{T}}\mathbbm{1}_{\{  {x} \le b\}}(x_T) = \mathbb{E}_z\mathbbm{1}_{\{A {x} \le b\}}(z) = \mbb P_z(A {z} \le b)= \int_{{x} \in \mathcal{P}}\phi({x}) d{x}
	\end{align*}} \nonumber \end{proof}
Proposition \ref{prop: MDP = polytope integral} shows that computing integrals of Gaussian functions over polytopes is equivalent to solving a DP problem. Later we will discuss how to find an approximate solution to DP problems of the form \eqref{Simple_DP}.
\section{Markov decision processes} \label{section 3: Markov decision processes}
In this section state the properties of the class of MDP's we are interested in.
\subsection{Markov Decision Processes}
In this chapter we follow closely the notation and definitions of \cite{Lasserre_1999}.
\begin{defn} \label{def: MDP}
	We say $\mathcal{M}$ is a finite time horizon Markov Decision Process (MDP) if it is a six tuple $\mathcal{M}$ = $((\{X_t\}_{t \in \mathbb{N}},X),\mathcal{A},\psi, \{Q_t\}_{t\in \mathbb{N}}, (c,h),T)$ such that the following hold,
	\begin{itemize}
		\item $X$ is a locally compact Borel space, with metric $d_X$, representing the state space. $\{X_t\}_{t \in \mathbb{N}}$ is a family of locally compact Borel subsets of $X$ representing the state space at time $t$.
		
		\item $\mathcal{A}$ is a locally compact Borel space with metric $d_A$ representing the set of admissible inputs.
		
		\item $\psi$ is a map $X \to \mathcal{A}^{\cup}$ such that for each $x\in X$, $\psi(x)$ is a measurable subset of $\mathcal{A}$ representing the set of feasible controls that can be used at state $x \in X$. We suppose $\mathbb{K}_t = \{(x,a): x \in X_t, a \in \psi(x)\}$ and $\mathbb{K} = \{(x,a): x \in X, a \in \psi(x)\}$ are measurable subsets of $X \times \mathcal{A}^\cup$. (Note if $\mathcal{A}= \emptyset$ then we define $\mathbb{K}= X$ and $\mathbb{K}_t= X_t$).

		
		\item $\{Q_t\}_{t \in \mathbb{N}}$ is a family of stochastic kernels. That is,  for $B \in \mcl B (X_{t+1})$ the map $B \to Q(B|x,a)$ is a probability measure on $(X_{t+1}, \mcl B(X_{t+1}))$ for all $(x,a)\in \mbb K_t$, and $(x,a) \to Q(B|x,a)$ is a measurable function on $\mbb K$ for every $B \in \mcl B(X_{t+1})$. When $\mcl A=\emptyset$, we simplify our notation by $Q_t(B|x)=Q_t(B|x,a)$. We denote the Lebesgue integral $\int_B f(y)Q_t(dy|x,a):= \int_B f(y)d \mu$ where $\mu$ is the induced measure created by the stochastic kernel, $\mu(B)=Q_t(B|x,a)$.
%
		\item $c: \mathbb{K} \to \mathbb{R}$ is a measurable function representing the cost per stage and $h:X \to \mathbb{R}$ is a measurable function representing the terminal cost.
		
		\item $T \in \mathbb{N}$ with $T<\infty$, representing the terminal time step.
		

	\end{itemize}
	Furthermore we denote $\mathbb{M}$ to be the set of all finite time horizon MDP's.
\end{defn}

\begin{defn}
	Consider a MDP $\mathcal{M} = ((\{X_t\}_{t \in \mathbb{N}},X),\mathcal{A},\psi, \{Q_t\}_{t\in \mathbb{N}}, (c,h),T) \in \mathbb{M}$. We define a policy to be a sequence of maps $\pi=\{\pi_t\}_{t \in [T-1]}$ such that $\pi_t:X_t \to \mathcal{A}$ and for all ${t \in [T-1]}$ $\pi_t(x) \in \psi(x)$ $\forall x \in X_t$. We denote the space of policies for the MDP $\mathcal{M}$ by $\Pi_{\mathcal{M}}$.
\end{defn}


\begin{defn}
	For every MDP $\mathcal{M} = ((\{X_t\}_{t \in \mathbb{N}},X),\mathcal{A},\psi, \{Q_t\}_{t\in \mathbb{N}}, (c,h),T) \in \mathbb{M}$ we can define its associated optimization problem, $\mcl L_{\mcl M}(x_0)$. 
	
	\small
	\begin{align*}
	& \min_{\pi \in \Pi_{\mathcal{M}}} G_{\mathcal{M}}(x_0, \pi) := \mathbb{E}_x[ \sum_{t=0}^{T-1}c(x_t,\pi_t(x_t))+h(x_T)] \text{ Given,}\\
	& \mathbb{P}_{x_{t+1}}(x_{t+1} \in B | x_{t}=x, a=\pi_t(x_t))= Q_{t}(B|x,a), \hspace{0.1cm} B \in \mathcal{B}( X_{t+1}),\\
	& x(0)= x_0,
	\end{align*}
	\normalsize
	where $G_{\mathcal{M}}(x_0, \pi)$ denotes the expected cost for the policy $\pi \in \Pi_{\mathcal{M}}$ and initial condition $x_0 \in X_0$ associated with $\mathcal{M}$.
\end{defn}

\begin{defn}
	Consider a MDP, $\mathcal{M} \in \mathbb{M}$. The optimal total expected cost, $G^* : X_0 \to \R $ is defined by $G^*_{\mathcal{M}}(x) = \inf_{ \pi \in \Pi_{\mathcal{M}}} G_{\mathcal{M}}(x, \pi)$ for $x \in X_0$. We define $\pi^* \in \Pi_{\mathcal{M}}$ to be the optimal policy if $G_{\mathcal{M}}(x, \pi^*)=G^{*}_{\mathcal{M}}(x)$ for any $x\in X_0$.
\end{defn}

Commonly the associated optimization problem for an MDP is solved using a method called dynamic programing where Bellman's equation, which we will define in the next definition, is recursively solved backwards in time.
\begin{defn}
	For a MDP $\mathcal{M} \in \mathbb{M}$ we define the optimal cost to go function (OCTGF) $J_{\mathcal{M},t}:X \to \mathbb{R}$ recursively as:
	\begin{align} \label{bellman}
	J_{\mathcal{M},T}(x) & = h(x) \quad x \in X_T  \\ \nonumber
	J_{\mathcal{M},t}(x) & =\inf_{a \in \psi(x)}\{ c(x,a) + \int_{X_{t+1}} J_{\mathcal{M},t+1}(y)Q_{t}(dy|x,a) \}\\ \nonumber
	& \qquad \qquad  \quad x \in X_t, \quad t \in [T-1] \nonumber
	\end{align}
\end{defn}
\begin{prop}
	For any MDP $\mathcal{M} \in \mathbb{M}$, if $J_{\mathcal{M},t}(x)$ is the associated OCTGF and $G^*_{\mathcal{M}}(x)$ is the optimal expected cost, then $J_{\mathcal{M},0}(x) = G_{\mathcal{M}}^*(x)$ for all $x \in X_0$. Moreover, for every $t$, there exists $f_t:X\rightarrow \mcl A$ such that
\[	
J_{\mathcal{M},t}(x)  = c(x,f_t(x)) + \int_{X_{t+1}} J_{\mathcal{M},t+1}(y)Q_{t}(dy|x,f_t(x)) \}.
\]
$f_t$ then defines the optimal policy as $\pi^*=\{f_{t}\}_{t \in [T-1]}$.
\end{prop}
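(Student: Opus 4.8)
The plan is to prove both assertions simultaneously by backward induction on $t$, from $t=T$ down to $t=0$. For a policy $\pi \in \Pi_{\mathcal{M}}$ and a state $x \in X_t$, I first introduce the tail cost
\[
G_{\mathcal{M},t}(x,\pi) := \mathbb{E}\Big[\, \sum_{s=t}^{T-1} c(x_s,\pi_s(x_s)) + h(x_T) \ \Big|\ x_t = x \Big],
\]
together with its infimum $V_t(x) := \inf_{\pi \in \Pi_{\mathcal{M}}} G_{\mathcal{M},t}(x,\pi)$, so that $V_0 \equiv G^*_{\mathcal{M}}$ by the definition of the optimal expected cost. The proposition then reduces to the single claim that $V_t(x) = J_{\mathcal{M},t}(x)$ for all $x \in X_t$ and all $t \in [T]$, evaluated at $t=0$. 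The base case $t=T$ is immediate, since no controls remain and $G_{\mathcal{M},T}(x,\pi) = h(x) = J_{\mathcal{M},T}(x)$.

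For the inductive step, assume $V_{t+1} = J_{\mathcal{M},t+1}$. The central identity is the one-step decomposition obtained from the Markov property of the state process and the tower rule for conditional expectation,
\[
G_{\mathcal{M},t}(x,\pi) = c(x,\pi_t(x)) + \int_{X_{t+1}} G_{\mathcal{M},t+1}(y,\pi)\, Q_t(dy \mid x,\pi_t(x)),
\]
which isolates the contribution of the action $\pi_t(x)$ taken at time $t$ from the cost accrued thereafter. To get $V_t \ge J_{\mathcal{M},t}$, I would bound $G_{\mathcal{M},t+1}(y,\pi) \ge V_{t+1}(y) = J_{\mathcal{M},t+1}(y)$ inside the integral, so that $G_{\mathcal{M},t}(x,\pi)$ is at least $c(x,a) + \int_{X_{t+1}} J_{\mathcal{M},t+1}(y)\, Q_t(dy\mid x,a)$ with $a = \pi_t(x) \in \psi(x)$; taking the infimum over $a \in \psi(x)$ and then over $\pi$ yields $V_t(x) \ge J_{\mathcal{M},t}(x)$.

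The reverse inequality $V_t \le J_{\mathcal{M},t}$ is where the existence statement enters. Granting a measurable map $f_s : X_s \to \mathcal{A}$ with $f_s(x) \in \psi(x)$ attaining the infimum in \eqref{bellman} for each $s \ge t$, the policy $\pi = \{f_s\}$ satisfies the decomposition above with equality at every stage, and unrolling it down to time $T$ gives $G_{\mathcal{M},t}(x,\pi) = J_{\mathcal{M},t}(x)$, hence $V_t(x) \le J_{\mathcal{M},t}(x)$. Assembling the selectors over all times produces $\pi^* = \{f_t\}_{t \in [T-1]}$, which by the equality just shown achieves $G_{\mathcal{M}}(x,\pi^*) = J_{\mathcal{M},0}(x) = G^*_{\mathcal{M}}(x)$ and is therefore the optimal policy.

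The main obstacle is justifying the measurable selector $f_t$: the infimum in \eqref{bellman} must actually be attained at a point depending measurably on $x$. This is not automatic for an infimum over the correspondence $\psi$ on an uncountable, merely locally compact state space, and it is the reason the regularity structure built into Definition \ref{def: MDP} is needed. I would invoke a measurable selection theorem in the spirit of \cite{Lasserre_1999} --- using lower semicontinuity of $c$, (weak) continuity of the kernels $Q_t$, measurability of the graphs $\mathbb{K}_t$, and compactness of the admissible sets $\psi(x)$ --- which guarantees both that the infimum is attained and that the argmin admits a measurable selection. The same hypotheses must also propagate the lower semicontinuity (hence measurability) of $J_{\mathcal{M},t+1}$ to $J_{\mathcal{M},t}$ at each stage, so that the induction is self-sustaining; verifying that the map $(x,a) \mapsto \int_{X_{t+1}} J_{\mathcal{M},t+1}(y)\,Q_t(dy\mid x,a)$ inherits the needed semicontinuity is the technical crux.
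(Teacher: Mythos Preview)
Your proposal is correct and follows the standard backward-induction argument; the paper does not actually prove this proposition but simply cites \cite{Lasserre_1999}, so what you have written is essentially the content one would find in that reference. Your identification of the measurable-selection step as the crux is apt: note, in fact, that compactness of $\psi(x)$ is only assumed for the subclass $\mathbb{A}$ (Property~\ref{property: psi is compact for each x}), not in Definition~\ref{def: MDP} for general $\mathcal{M}\in\mathbb{M}$, so the proposition as stated is implicitly relying on the hypotheses of the cited source rather than on Definition~\ref{def: MDP} alone.
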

\begin{proof}
	See \cite{Lasserre_1999}.
	\end{proof}

\begin{cor} \label{cor: showing poltope evaluation problem is an MDP}
	There exists $\mcl M \in \mbb M$ such that the associated optimization problem, $\mcl L_{\mcl M}(0)$, is equivalent to \eqref{Simple_DP}.
\end{cor}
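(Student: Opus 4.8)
The plan is to exhibit the six-tuple of Definition \ref{def: MDP} explicitly and then verify that the objective $G_{\mcl M}$ of the associated optimization problem collapses to $J$. Since \eqref{Simple_DP} contains no decision variable, I would make the control apparatus trivial: take the state space $X = X_t = \R^m$ for every $t$ (locally compact and Borel), set $\mcl A = \emptyset$ so that by the convention stated in Definition \ref{def: MDP} we have $\mbb K = X$ and $\mbb K_t = X_t$, and let $\psi(x) = \emptyset$. The stage cost is $c \equiv 0$ and the terminal cost is $h(x) = \mathbbm{1}_{\{x \in \R^m\,:\,x \le b\}}(x)$, with terminal time $T$. The only nontrivial datum is the family of stochastic kernels, which must encode the recursion $x_{t+1} = x_t + A_{\cdot,t+1}\epsilon_t$ with $\epsilon_t \sim \mcl N(0,1)$; I would define, for $B \in \mcl B(X_{t+1})$,
\[
Q_t(B \mid x) := \mbb P\big(x + A_{\cdot,t+1}\,\epsilon \in B\big), \qquad \epsilon \sim \mcl N(0,1),
\]
i.e. the pushforward of the standard Gaussian under the affine map $s \mapsto x + A_{\cdot,t+1}s$.

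Next I would check that this $\mcl M$ satisfies every clause of Definition \ref{def: MDP}. The conditions on $X$, $X_t$, $\mcl A$, $\psi$ and $\mbb K$ are immediate. For $Q_t$ I must confirm it is a genuine stochastic kernel: for fixed $x$ the map $B \mapsto Q_t(B \mid x)$ is a probability measure on $\R^m$ since it is the law of the (possibly degenerate) Gaussian vector $x + A_{\cdot,t+1}\epsilon$; for fixed $B$ the map $x \mapsto Q_t(B \mid x) = \int_{\R} \mathbbm{1}_{B}(x + A_{\cdot,t+1}s)\,\tfrac{1}{\sqrt{2\pi}}e^{-s^2/2}\,ds$ is Borel measurable, which I would obtain from measurability of $(x,s)\mapsto \mathbbm{1}_B(x + A_{\cdot,t+1}s)$ together with Tonelli's theorem. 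Measurability of $c$ and of $h$ (an indicator of the Borel box $\{x \le b\}$) is clear.

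Finally I would evaluate the associated optimization problem at $x_0 = 0$. Because $c \equiv 0$ and there is no control to choose, the objective reduces to
\[
G_{\mcl M}(0, \pi) = \mbb E_x[h(x_T)] = \mbb E_{x_T}\big[\mathbbm{1}_{\{x \le b\}}(x_T)\big],
\]
where the law of $x_T$ is obtained by iterating the kernels $Q_0,\dots,Q_{T-1}$ from $x_0 = 0$. By construction this iteration reproduces exactly the dynamics $x_{t+1} = x_t + A_{\cdot,t+1}\epsilon_t$ of \eqref{Simple_DP}, whence $G_{\mcl M}(0,\pi) = J$ and $\mcl L_{\mcl M}(0)$ coincides with \eqref{Simple_DP}.

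I expect the only genuinely delicate point to be the formal treatment of the absent control inside the MDP framework: with $\mcl A = \emptyset$ the policy set $\Pi_{\mcl M}$ contains no maps $X_t \to \mcl A$, so one must either read the minimization over this degenerate policy space as the single uncontrolled trajectory, or --- more cleanly --- replace $\mcl A = \emptyset$ by a singleton $\mcl A = \{a_0\}$ with $\psi(x) = \{a_0\}$ and $Q_t(\cdot \mid x, a_0) = Q_t(\cdot \mid x)$, which leaves $\Pi_{\mcl M}$ a singleton and renders the ``$\min$'' vacuous without changing the value $J$. The remaining verification that $Q_t$ is a stochastic kernel is routine measure theory.
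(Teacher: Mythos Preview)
Your construction is correct and is essentially identical to the paper's: you exhibit the same six-tuple with $X=X_t=\R^m$, $\mcl A=\psi(x)=\emptyset$, $c\equiv 0$, $h=\mathbbm{1}_{\{x\le b\}}$, and the kernel $Q_t(B\mid x)=\int_{\R}\mathbbm{1}_B(x+A_{\cdot,t+1}s)\tfrac{1}{\sqrt{2\pi}}e^{-s^2/2}\,ds$. Your additional measurability checks and the remark on interpreting the minimization over the degenerate policy space (or equivalently replacing $\mcl A$ by a singleton) are sound elaborations that the paper leaves implicit.
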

\begin{proof}
	We propose an MDP $\mathcal{M} = ((\{X_t\}_{t \in \mathbb{N}},X),\mathcal{A},\psi, \{Q_t\}_{t\in \mathbb{N}}, (c,h),T) \in \mathbb{M}$ with an associated optimization problem equivalent to \eqref{Simple_DP}. We define the elements of $\mathcal{M}$ as follows,
	
		\vspace{-0.25cm}
		 \begin{equation}  \label{1}
		 \hspace{-3.25cm} \bullet \quad X=\mathbb{R}^m \text{ and } X_t=X \text{ for all } t \in \mathbb{N}.
		\end{equation}
		\vspace{-0.5cm}
	\begin{equation} \label{2}
	\hspace{-7cm} \bullet \quad  \mathcal{A}= \emptyset.
	\end{equation}
		\begin{equation} \label{3}
	\hspace{-4.75cm} \bullet \quad \psi(x)= \emptyset \text{ for all } x\in X.
		\end{equation}
		 $\bullet$ We can define the family of stochastic kernels for $B \in \mathcal{B}( \mathbb{R}^m)$ and $x \in \R^m$,
		\vspace{-0.1cm}
			\begin{align} \label{4}
			Q_t(B|{x}) = \int_{-\infty}^{\infty} \mathbbm{1}_{B}({x} + A_{\cdot, t+1} y) \frac{1}{\sqrt{ 2 \pi}} \exp \left({\frac{-y^2}{2}} \right)dy
			\end{align}
			
			Where $A=[A_{\cdot,1},\cdots,A_{\cdot,T}] \in \mathbb{R}^{m \times T}$ as in \eqref{Simple_DP}.\\
		$\bullet$ The cost per stage and the terminal cost is,
		\begin{align} \label{5}
		& c( {x})=0, \quad h( {x})=\mathbbm{1}_{\{  {x} <b\}}(x).
		\end{align}
		Where $b \in \mathbb{R} ^m$ is from \eqref{Simple_DP}.\\
		 $\bullet$ The finite time horizon is given to be, 
\begin{equation} \label{6}
		 T < \infty \quad \text{is as defined in Eq.~\eqref{Simple_DP}}.
		 \end{equation} \end{proof}

\vspace{-0.4cm}
\subsection{Readily-aproximable MDP's}
Next we introduce similar properties of MDP's that \cite{F_Dufour_2012} approximates, however we allow for discontinuity in the terminal cost function and require Property \ref{ass: proability transitions to region tends to 0}.
\begin{defn}
	We say a six tuple $\mathcal{M}=((\{X_t\}_{t \in \mathbb{N}},X),\mathcal{A},\psi, \{Q_t\}_{t\in \mathbb{N}}, (c,h),T)$ is an \textbf{approximable MDP} or $\mcl M \in \mbb A \subset \mathbb{M}$ if $\mcl M$ satisfies the following Properties 1-8.
\end{defn}

%

\begin{property} \label{property: State space condition}
	$X=\R^m$.
\end{property}

\begin{property} \label{property: psi is compact for each x}
	$\psi(x)$ is compact for all $x \in X$.
\end{property}

\begin{property} \label{property: psi is lipshitz}
	The map $\psi: X \to \mathcal{A}^\cup$ is Lipschitz continuous with respect to the Hausdorff norm. So $d_H(\psi(x),\psi(y)) \le L_{\psi} d_X(x,y)$ for some constant $L_{\psi}>0$.
\end{property}


\begin{property} \label{ass: terminal}
	The cost function, $c: \mathbb{K} \to \mathbb{R}$, is Lipschitz continuous on $\mathbb{K}$. The terminal cost function, $h:X \to \mathbb{R}$, can be written in the form $h(x)=h_1(x) + h_2(x)$ where $h_1$ is a Lipschitz continuous and $h_2$ is of the form $h_2(x) = \begin{cases}
	f_1(x) & \quad x \le b \\
	f_2(x) & \quad x > b
	\end{cases}. $ Where $b \in X$ and $f_1$ and $f_2$ are bounded and Lipschitz continuous (we note $h_1$ is not necessarily bounded). Furthermore there exists a positive lower semi-continuous function $w: X \to \mathbb{R}$ and a positive constant $\bar{c}>0$ such that \begin{equation} \label{eqn: w}
	|h_1(x)|+||h_{2}||_{\infty} + \sup_{a \in \psi(x)} |c(x,a)|  < \bar{c} w(x) .
	\end{equation}
\end{property}
Before we proceed to Property 5 we will introduce some additional notation. Given a function $v:X \to \mathbb{R}$ and an MDP $\mathcal{M}=((\{X_t\}_{t \in \mathbb{N}},X),\mathcal{A},\psi, \{Q_t\}_{t\in \mathbb{N}}, (c,h),T) \in \mathbb{M}$, we define $\zeta_{v,t}^{\mathcal{M}}: \mathbb{K}_t \to \mathbb{R}$ by, \begin{equation} \label{eqn: zeta}
 \zeta_{v,t}^{\mathcal{M}}(x,a)=\int_{X_{t+1}} v(y)Q_{t}(dy|x,a).
\end{equation} We note for the MDP with tuple elements defined \eqref{1} to \eqref{6} we have $\zeta_{v,t}^{\mathcal{M}}(x)=\int_{\mathbb{R}} v(x+ {A}_{\cdot,t+1} w) \phi(w) dw$ for $x \in \mathbb{R}^m$.

\begin{property} \label{ass: zeta of w inequlity}
	There exists $w$ satisfying \eqref{eqn: w} such that $\zeta_{w,t}^{\mathcal{M}}(x,a)$ is upper continuous on $\mathbb{K}_t$. In addition there exists $\bar{d}>0$ such that $\zeta_{w,t}^{\mathcal{M}}(x,a) \le \bar{d} w(x)$ for all $(x,a) \in \mathbb{K}_t$.
\end{property}

\begin{property} \label{property: zeta is continuous}
	For every bounded and continuous function $v$ on $X$, the MDP $\mathcal{M}$ has the property that the induced function $\zeta_{v,t}^{\mathcal{M}}$ is continuous on $\mathbb{K}$ for each $t\in \mathbb{N}$ .
\end{property}

\begin{property} \label{property: Zeta is lipshitz}
	There exists a constant $L_q>0$ such that $\forall t \in \mathbb{N}$, $(x,a)$ and $(y,b)$ in $\mathbb{K}_t$ and for any Lipschitz continuous function $v:X \to \mathbb{R}$ with Lipschitz constant $L_v >0$:
	\begin{equation*}
	|\zeta_{v,t}^{\mathcal{M}}(x,a)-\zeta_{v,t}^{\mathcal{M}}(y,b)|<L_{q}L_{v}|d_X(x,y)+d_A(a,b)|
	\end{equation*}
\end{property}


\begin{property} \label{ass: proability transitions to region tends to 0}
		Consider $w: X \to \R$ and $b \in \R^m$ as in Property \ref{ass: terminal}, then for all $\theta >0$ there exists an $\Lambda \in \mathbb{R}$ such that,\\
		 $|sup_{x \in X,u \in \psi(x)} \int_{y \in \mathcal{R}_{\lambda,b}} w(y) Q_{T-1}(dy|x,u)|< \theta$ for all $\lambda>\Lambda$.
\end{property}
Next we will prove a Lemma showing the MDP associated with tuple elements defined \eqref{1} to \eqref{6} has Property \ref{ass: proability transitions to region tends to 0}. Then in the next proposition we will show the MDP is in $\mathbb{A}$.
	\begin{lem} \label{lem: Bound_of_transition_to_bad_region}
Consider the stochastic kernel, of the MDP with tuple elements \ref{1} to \ref{6},
\[
			Q_t(B|{x})  = \int_{-\infty}^{\infty} \mathbbm{1}_{B}({x} + A_{\cdot,t+1} y) \frac{1}{\sqrt{ 2 \pi}} \exp\left({\frac{-y^2}{2}}\right)dy
\]
Where $B \in \mcl B (\R^m)$, $b \in \R^m$, $x \in \R^m$ and $A \in \R^{m \times T}$,
 then for all $\lambda>0$ we have,
		\small{\begin{equation} \label{eqn: final bound on transitioning into the bad region}
		\left|\sup_{x \in \mathbb{R}^m} \int_{y \in \mathcal{R}_{\lambda,b}} Q_{T-1}(dy|x)\right|<\frac{m}{\min_{1 \le i \le m}\{|a_{i,T}|\}\lambda}.
		\end{equation}} \normalsize
	\end{lem}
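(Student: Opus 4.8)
The plan is to first pin down the geometry of the smoothing region $\mathcal{R}_{\lambda,b}$ and then reduce the $m$-dimensional transition measure to a one-dimensional Gaussian integral through the explicit form of the kernel. Recalling that $g_{\lambda,b}=\prod_{i=1}^{m}g_i$ with each $g_i\in[0,1]$, one has $g_i(z)=1\iff z_i\le b_i-\tfrac1\lambda$ and $g_i(z)=0\iff z_i\ge b_i$. Comparing against $\mathbbm{1}_{\{z<b\}}$, I would show that $g_{\lambda,b}$ and the indicator agree off the ``collar''
\[
\mathcal{R}_{\lambda,b}=\{z\in\R^m : z_j<b_j \text{ for all } j \text{ and } z_i>b_i-\tfrac1\lambda \text{ for some } i\},
\]
since on $\{z_i\ge b_i \text{ for some } i\}$ both the product and the indicator vanish, while on $\{z_j\le b_j-\tfrac1\lambda \text{ for all } j\}$ both equal $1$. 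The only relevant consequence is the containment $\mathcal{R}_{\lambda,b}\subseteq\bigcup_{i=1}^{m}S_i$, where $S_i=\{z : b_i-\tfrac1\lambda<z_i<b_i\}$.

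Next I would write the quantity of interest as $Q_{T-1}(\mathcal{R}_{\lambda,b}\mid x)=\int_{\R}\mathbbm{1}_{\mathcal{R}_{\lambda,b}}(x+A_{\cdot,T}\,y)\,\tfrac{1}{\sqrt{2\pi}}e^{-y^2/2}\,dy$, which is manifestly nonnegative, so the outer absolute value in the statement is harmless. Applying the union bound $\mathbbm{1}_{\mathcal{R}_{\lambda,b}}\le\sum_{i=1}^{m}\mathbbm{1}_{S_i}$ reduces the estimate to controlling, for each $i$, the Gaussian mass of the set of $y$ with $b_i-\tfrac1\lambda<x_i+a_{i,T}y<b_i$. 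Since Lemma \ref{lem: Wlog last column is all non-zero} lets us assume $a_{i,T}\neq0$, this set is a single interval whose length in $y$ is exactly $\tfrac{1}{\lambda|a_{i,T}|}$, independent of $x$.

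Finally, I would bound the Gaussian integral over each such interval by the product of the peak density $\tfrac{1}{\sqrt{2\pi}}$ and the interval length, obtaining $\tfrac{1}{\sqrt{2\pi}}\cdot\tfrac{1}{\lambda|a_{i,T}|}$. Summing over $i$ and using $|a_{i,T}|\ge\min_{j}|a_{j,T}|$ yields the uniform-in-$x$ estimate $\sup_{x}Q_{T-1}(\mathcal{R}_{\lambda,b}\mid x)\le\frac{m}{\sqrt{2\pi}\,\lambda\min_{i}|a_{i,T}|}$, and the strict inequality claimed in \eqref{eqn: final bound on transitioning into the bad region} then follows immediately from $\tfrac{1}{\sqrt{2\pi}}<1$.

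The step I expect to be the main obstacle is the first one: correctly identifying $\mathcal{R}_{\lambda,b}$ by unpacking the piecewise product $g_{\lambda,b}$ and carefully matching it against the indicator on each of the three regimes (below the collar, inside it, and above $b$), including the measure-zero boundary faces where the piecewise definition switches. Once the containment $\mathcal{R}_{\lambda,b}\subseteq\bigcup_i S_i$ is in hand, the remainder is routine: a union bound followed by the elementary fact that a Gaussian integral over an interval is at most (peak density)$\times$(length), together with a change of variables $z_i=x_i+a_{i,T}y$ to compute that length.
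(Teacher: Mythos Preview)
Your proposal is correct and follows essentially the same route as the paper: reduce $\mathcal{R}_{\lambda,b}$ to the union of the coordinate slabs $S_i=\{z:b_i-\tfrac1\lambda<z_i<b_i\}$, apply a union bound, and bound each resulting one-dimensional Gaussian integral by (density bound)$\times$(interval length $=\tfrac{1}{\lambda|a_{i,T}|}$). The only cosmetic differences are that the paper explicitly locates the maximizer $x^*=b_i-\tfrac{1}{2\lambda}$ of $f_i$ before bounding (which you bypass by noting the interval length is independent of $x$), and the paper uses the cruder density bound $\tfrac{1}{\sqrt{2\pi}}e^{-w^2/2}\le 1$ rather than your $\le\tfrac{1}{\sqrt{2\pi}}$; your argument is thus slightly more direct and yields a constant that is a factor $\tfrac{1}{\sqrt{2\pi}}$ sharper before you relax it to match the statement.
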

	\begin{proof}
	By Lemma \ref{lem: Wlog last column is all non-zero} we have  $a_{i,T} \ne 0$ $\forall i \in \{1,..,m\}$. For some $x \in \mathbb{R}^m$ and $\lambda>0$ we have,	
	\vspace{-0.2cm}
		\begin{align} \label{eqn: initial inequality for tranistioning into the bad region}
		\int_{y \in \mathcal{R}_{\lambda,b}} Q & _{T-1}  (  dy|x)  = \mathbb{P}_{\eps_{T-1}}  (x + A_{\cdot,T} \epsilon_{T-1} \in \mathcal{R}_{\lambda,b} )\\ \nonumber
		& = \mathbb{P}_{\eps_{T-1}} \left(\cup_{1 \le i \le m} \left\{ x_i + a_{i,T} \epsilon_{T-1} \in (b_i-\dfrac{1}{\lambda},b_i) \right\} \right)\\ \nonumber
		& \le \sum_{i=1}^{m} \mathbb{P}_{\eps_{T-1}} \left(\epsilon_{T-1} \in \dfrac{1}{a_{i,T}}(b_{i}-\dfrac{1}{\lambda} - x_i ,b_{i} - x_i) \right).
		\end{align}
Where $\eps_{T-1} \sim \mcl N(0,1)$. For $i \in \{1,...,m\}$ let us consider the function $f_i: \mathbb{R} \to [0,1]$ defined by,
		\vspace{-0.4cm} \begin{align*}
		f_i(x) & =\mathbb{P}_{\eps_{T-1}}\left(\epsilon_{T-1} \in \dfrac{1}{a_{i,T}}(b_{i}-\dfrac{1}{\lambda} - x ,b_{i} - x) \right)\\
		&= \int_{\frac{b_{i}-\frac{1}{\lambda}-x}{a_{i,T}}}^{\frac{b_{i}-x}{a_{i,T}}} \frac{1}{\sqrt{2 \pi}} \exp\left(\dfrac{-w^2}{2} \right) dw.
		\end{align*}
It can be shown $x^*=b_{i} - \frac{1}{2\lambda}$ is the point at which $f_i$ attains its maximum.
Now,
		\begin{align*}
		f_i(x^*)= &  \int_{-\frac{1}{2a_{i,T}\lambda} }^{\frac{1}{2a_{i,T}\lambda} } \frac{1}{\sqrt{2 \pi}} \exp \left(\dfrac{-w^2}{2} \right) dw
		\le  \int_{-\frac{1}{2a_{i,T}\lambda} }^{\frac{1}{2a_{i,T}\lambda} } dw\\
		= &\frac{1}{|a_{i,T}|\lambda}
		\le  \frac{1}{\min_{1 \le i \le m}|a_{i,T}|\lambda}.
		\end{align*}
Now by substituting this into \eqref{eqn: initial inequality for tranistioning into the bad region} we derive \eqref{eqn: final bound on transitioning into the bad region}. \end{proof}

\begin{prop} \label{prop: proves our polytope MDP is readily-aproximable}
	Let us denote the MDP with tuple elements defined \eqref{1} to \eqref{6} by $\mathcal{M}$, then $\mathcal{M} \in \mathbb{A}$.
\end{prop}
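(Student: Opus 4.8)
The plan is to verify that the proposed MDP satisfies each of the eight Properties in turn, exploiting two structural simplifications: the control space is empty ($\mathcal{A}=\emptyset$, so $\psi(x)=\emptyset$), which collapses every dependence on $a$ and on the metric $d_A$; and both the stage cost $c\equiv 0$ and the terminal cost $h=\mathbbm{1}_{\{x<b\}}$ are bounded, which lets me use a \emph{constant} weight. Concretely, I would set $w\equiv 1$ throughout and check that this single choice simultaneously discharges the domination requirement of Property~\ref{ass: terminal} and the bound of Property~\ref{ass: zeta of w inequlity}; $w\equiv 1$ is positive and continuous, hence lower semi-continuous, as required.

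First I would dispatch the structural Properties. Property~\ref{property: State space condition} is immediate from \eqref{1}. Since $\psi(x)=\emptyset$ is compact and constant in $x$, Properties~\ref{property: psi is compact for each x} and \ref{property: psi is lipshitz} hold trivially (any $L_\psi>0$ works, as $d_H(\psi(x),\psi(y))=0$). For Property~\ref{ass: terminal}, I decompose $h=h_1+h_2$ with $h_1\equiv 0$ (Lipschitz) and $h_2=h=\mathbbm{1}_{\{x<b\}}$, identifying $f_1\equiv 1$ and $f_2\equiv 0$, both bounded and Lipschitz; since $c\equiv 0$, $h_1\equiv 0$ and $\|h_2\|_\infty=1$, the inequality \eqref{eqn: w} reduces to $1<\bar c\,w(x)$, which holds with $w\equiv 1$ and, say, $\bar c=2$. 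The same weight handles Property~\ref{ass: zeta of w inequlity}: using the kernel form recorded after \eqref{eqn: zeta}, $\zeta^{\mathcal{M}}_{w,t}(x)=\int_{\mathbb{R}} w(x+A_{\cdot,t+1}y)\phi(y)\,dy=\int_{\mathbb{R}}\phi(y)\,dy=1$, which is continuous (hence upper continuous) and satisfies $\zeta^{\mathcal{M}}_{w,t}(x)=1\le\bar d\,w(x)$ with $\bar d=1$.

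The analytic core is Properties~\ref{property: zeta is continuous} and \ref{property: Zeta is lipshitz}, both of which rest on the translation-against-a-fixed-density form $\zeta^{\mathcal{M}}_{v,t}(x)=\int_{\mathbb{R}} v(x+A_{\cdot,t+1}y)\phi(y)\,dy$. For Property~\ref{property: zeta is continuous}, given bounded continuous $v$ and $x_n\to x$, I would apply dominated convergence: the integrand converges pointwise by continuity of $v$ and is dominated by the integrable map $\|v\|_\infty\,\phi$, yielding continuity of $\zeta^{\mathcal{M}}_{v,t}$. For Property~\ref{property: Zeta is lipshitz}, pushing the absolute value inside the integral and invoking the Lipschitz bound on $v$ gives $|\zeta^{\mathcal{M}}_{v,t}(x)-\zeta^{\mathcal{M}}_{v,t}(y)|\le\int_{\mathbb{R}}L_v\,d_X(x,y)\,\phi(y)\,dy=L_v\,d_X(x,y)$, so $L_q=1$ suffices, the $d_A$ term being absent since $\mathcal{A}=\emptyset$.

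Finally, Property~\ref{ass: proability transitions to region tends to 0} is essentially already proved: with $w\equiv 1$ the relevant quantity is exactly $\int_{y\in\mathcal{R}_{\lambda,b}}Q_{T-1}(dy|x)$, and Lemma~\ref{lem: Bound_of_transition_to_bad_region} bounds its supremum over $x$ by $m/(\min_{1\le i\le m}|a_{i,T}|\,\lambda)$; given $\theta>0$, choosing $\Lambda=m/(\min_{1\le i\le m}|a_{i,T}|\,\theta)$ makes this smaller than $\theta$ for all $\lambda>\Lambda$. I expect the only genuinely nonroutine step to be justifying the interchange of limit and integral in Property~\ref{property: zeta is continuous} (and the analogous bound in Property~\ref{property: Zeta is lipshitz}); everything else follows from boundedness and the degenerate control structure. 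A minor point worth stating carefully is that the multivariate $\mathbbm{1}_{\{x<b\}}$ is bounded with discontinuity set confined to the hyperplanes $\{x_i=b_i\}$, so it fits the two-piece schematic of $h_2$ once $f_1,f_2$ are read as the constants $1$ and $0$ up to that measure-zero boundary.
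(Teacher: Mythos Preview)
Your proposal is correct and follows essentially the same route as the paper: verify Properties~\ref{property: State space condition}--\ref{ass: proability transitions to region tends to 0} one by one, using $w\equiv 1$, the trivial control structure, dominated convergence for Property~\ref{property: zeta is continuous}, the direct Lipschitz estimate giving $L_q=1$ for Property~\ref{property: Zeta is lipshitz}, and Lemma~\ref{lem: Bound_of_transition_to_bad_region} for Property~\ref{ass: proability transitions to region tends to 0}. Your treatment is in fact slightly more explicit (you name $\bar c$, $\bar d$, and $\Lambda$), and your caveat about the two-piece form of $h_2$ is well taken, though note that the region where neither $x\le b$ nor $x>b$ holds componentwise has positive measure; the decomposition still works because $f_2\equiv 0$ there matches the indicator.
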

\begin{proof}
	To show $\mathcal{M}\in \mathbb{A}$ we will show $\mathcal{M}$ satisfies Properties 1-8.\\
	\underline{Property \ref{property: State space condition}}: True since $X=\mathbb{R}^m$. \\
	 \underline{Properties \ref{property: psi is compact for each x} and \ref{property: psi is lipshitz}}: $\emptyset$ is compact and $\psi(x)=\emptyset$ $\forall x \in X$, moreover it follows $d_H(\psi(x), \psi(y))=0$ for all $x,y \in X$.\\
	\underline{Property \ref{ass: terminal}}: $c(x,a) \equiv 0$, $h_1(x) \equiv 0$ and $h_2(x) = \begin{cases}
	& 1 \quad  x \le b\\
	& 0 \quad x >b
	\end{cases}$. We can trivially select $w(x) \equiv 1$ in this case.\\
			\underline{Property \ref{ass: zeta of w inequlity}}: The probability measure of the entire state space is 1. $\zeta_{w,t}^{\mathcal{M}}(x,a) = \int_X Q_t(dy|x,a)=1 = w(x)$.\\
	\underline{Property \ref{property: zeta is continuous}}: Consider continuous and bounded function $v:\mathbb{R}^m \to \mathbb{R} $ and let $C= ||v||_{\infty}$. Let us denote $\phi(u)= \frac{1}{\sqrt{2 \pi}} \exp\left(\dfrac{-u^2}{2} \right)$. We can use Dominated Convergence Theorem (DCT) to show $\zeta_{v,t}^{\mathcal{M}}(x)= \int_{y \in \mathbb{R}^m} v(y) Q_{t}(dy|x)= \int_{-\infty}^{\infty} v(x + A_{t+1} u)\phi(u)du$ is continuous with respect to $x$. Suppose $ \lim_{n \to \infty}x_n = x$ and let $g_n(u) =  v(x_n + A_{t+1} u)\phi(u)$. Since $v$ is continuous clearly $\lim_{n \to \infty}g_n = g=v(x + A_{t+1} u)\phi(u)$. Now $g_n(u) \le \sup_x |v(x + A_{t+1} u)|\phi(u) \le C \phi(u)$. Thus $g_n(u)$ is dominated by some integrable function $C \phi(u)$ ($\int |C \phi(u)| du =C< \infty$) and tends point-wise to $g(u)$. It follows by DCT $\lim_{n \to \infty} \int g_n(u) du = \int g(u) du$, showing $\zeta_{v,t}^{\mathcal{M}}(x)$ is continuous.\\
	\underline{Property \ref{property: Zeta is lipshitz}}: We will show $L_q=1$. Suppose $v$ is a Lipschitz continuous function.
	\begin{align*}
	& \left|\zeta_{v,t}^{\mathcal{M}}(x,a)- \zeta_{v,t}^{\mathcal{M}}(y,b) \right| \\ & = \left|\int_{-\infty}^{\infty} [v(x + A_{\cdot, t+1} w)-v(y + A_{\cdot, t+1} w)] \phi(w)dw \right|\\
	& \le L_{v} \int_{-\infty}^{\infty} \left|(x + A_{\cdot,t+1} w)-(y + A_{\cdot,t+1} w) \right|\phi(w)dw\\
	& = L_{v} |x-y|
	\end{align*}
	\underline{Property \ref{ass: proability transitions to region tends to 0}}: The result follows from Lemma \ref{lem: Bound_of_transition_to_bad_region}. \end{proof}

\section{Approximating MDP's} \label{section 4: Approximating MDP's}

Given $\mcl M \in \mbb A$ our approximation scheme has two stages; smoothing and discretization. During the smoothing stage the terminal cost function of the MDP is approximated with a Lipschitz continuous function. During the discretization stage the state and control spaces are approximated with compact spaces and then further approximated to countable sets.

\subsection{Smoothing}
For any MDP $\mathcal{M} \in \mathbb{A}$ we will show how to use the function $g_{\lambda,b}(x)$ to construct a sequence of MDP's with smooth terminal cost function and OCTGF's that converge to the OCTGF of $\mcl M$ under the supremum norm. 



	

\begin{defn}
	Consider an approximable MDP $\mathcal{M}=((\{X_t\}_{t \in \mathbb{N}},X),\mathcal{A},\psi, \{Q_t\}_{t\in \mathbb{N}}, (c,h),T) \in \mathbb{A}$. By Property \ref{ass: terminal} we can write $h(x)=h_1(x)+h_2(x)$ where $h_1$ is Lipschitz continuous and $h_2(x) = \begin{cases}
	f_1(x) & \quad x \le b \\
	f_2(x) & \quad x > b
	\end{cases} $. Let us define the smooth function $\tilde{h}_{\lambda}(x;b; \mathcal{M})=f_1(x)g_{\lambda,b}(x) + f_2(x)(1-g_{\lambda,b}(x))$. We call the MDP $\tilde{\mathcal{M}_\lambda}=((\{X_t\}_{t \in \mathbb{N}},X),\mathcal{A},\psi, \{Q_t\}_{t\in \mathbb{N}}, (c,h_1 + \tilde{h}_\lambda),T)$ the $\lambda$-smoothed MDP of $\mathcal{M}$. Furthermore we define the map $\Phi_1 : \mathbb{A} \times \mathbb{R}^+ \to \mathbb{A} $ by $\Phi_1(\mathcal{M}, \lambda)=\tilde{\mathcal{M}}_\lambda$.
\end{defn}
Next we will show that the terminal cost function of the $\lambda$-smoothed MDP is Lipschitz continuous.
\begin{cor} \label{cor: h tilde Lipschitz.}
	The function $\tilde{h}_\lambda:X \to \mathbb{R}$ defined by $\tilde{h}_\lambda (x;b, \mathcal{M})=f_1(x)g_{\lambda,b}(x) + f_2(x)(1-g_{\lambda,b}(x))$, where $f_1$ and $f_2$ are any bounded Lipschitz functions, is Lipschitz continuous with Lipschitz constant $L_{\tilde{h_\lambda}}=[L_{f_{1}}+L_{f_{2}} + 2 \lambda m \max\{||f_1||_{\infty},||f_2||_{\infty}\}]$. Where $m$=dim($X$).
\end{cor}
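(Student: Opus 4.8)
The plan is to reduce the statement to two ingredients: first, that the smoothed indicator $g_{\lambda,b}$ is itself Lipschitz with an explicit constant and takes values in $[0,1]$; and second, the elementary ``product rule'' for Lipschitz functions, namely that if $u,v:X\to\R$ are bounded and Lipschitz then $uv$ is Lipschitz with $L_{uv}\le \norm{u}_\infty L_v + \norm{v}_\infty L_u$. Given these, the claimed constant falls out by writing $\tilde h_\lambda = f_1 g_{\lambda,b} + f_2(1-g_{\lambda,b})$ and bounding each summand, so the real work is all in computing the Lipschitz constant of $g_{\lambda,b}$.

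First I would analyze the coordinate factors $g_i$ defined in Equation~\eqref{eqn: g}. Each $g_i$ depends only on the single coordinate $x_i$, is continuous and piecewise linear, and has slope $0$, $-\lambda$, or $0$ on the three pieces; hence $0\le g_i(x)\le 1$ and $|g_i(x)-g_i(y)|\le \lambda|x_i-y_i|\le\lambda\norm{x-y}_2$ for all $x,y$. I would then pass from the factors to the product $g_{\lambda,b}=\Pi_{i=1}^m g_i$ via a telescoping identity,
\[
g_{\lambda,b}(x)-g_{\lambda,b}(y)=\sum_{k=1}^{m}\Bigl(\prod_{i<k} g_i(x)\Bigr)\bigl(g_k(x)-g_k(y)\bigr)\Bigl(\prod_{i>k} g_i(y)\Bigr).
\]
Since every factor lies in $[0,1]$, each of the two partial products is bounded by $1$, so each summand is at most $\lambda\norm{x-y}_2$; summing the $m$ terms yields $L_{g_{\lambda,b}}\le m\lambda$, together with $0\le g_{\lambda,b}\le 1$.

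With these facts established, I would apply the product rule twice. Taking $u=f_1$, $v=g_{\lambda,b}$ gives $L_{f_1 g_{\lambda,b}}\le \norm{f_1}_\infty\, m\lambda + L_{f_1}$, using $\norm{g_{\lambda,b}}_\infty\le 1$. Taking $u=f_2$, $v=1-g_{\lambda,b}$ (which is again bounded by $1$ with the same Lipschitz constant $m\lambda$) gives $L_{f_2(1-g_{\lambda,b})}\le \norm{f_2}_\infty\, m\lambda + L_{f_2}$. Adding the two bounds and then replacing $\norm{f_1}_\infty+\norm{f_2}_\infty$ by the cruder $2\max\{\norm{f_1}_\infty,\norm{f_2}_\infty\}$ produces exactly $L_{\tilde h_\lambda}=L_{f_1}+L_{f_2}+2\lambda m\max\{\norm{f_1}_\infty,\norm{f_2}_\infty\}$.

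The main obstacle, and the only step that is not a one-line calculation, is the Lipschitz bound for the product $g_{\lambda,b}$: one must handle the interaction of the $m$ coordinate factors cleanly. The telescoping identity above is the natural device, and the crucial structural facts that make it work are that each $g_i$ is bounded by $1$ (so the partial products never amplify the increment) and that the increment $|g_k(x)-g_k(y)|$ is controlled in the single coordinate $x_k$. Here one should be slightly careful about the norm bookkeeping: the bound $|x_k-y_k|\le\norm{x-y}_2$ applied to each of the $m$ terms is what yields the factor $m\lambda$ rather than a dimension-free constant, and this factor is precisely what appears in the stated Lipschitz constant.
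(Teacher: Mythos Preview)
Your argument is correct. The paper actually states this corollary without proof, so there is nothing to compare against; your approach---computing $L_{g_{\lambda,b}}\le m\lambda$ via the telescoping product identity and then applying the Lipschitz product rule $L_{uv}\le\norm{u}_\infty L_v+\norm{v}_\infty L_u$ to each summand---is exactly the natural way to fill the gap, and every step checks out.
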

\begin{rem} \label{rem: first map sends aproximable MDP to aproximable MDP}
	The image of the map $\Phi_1$ is a subset of $\mathbb{A}$. Furthermore for any $\mcl M \in \mbb A$ and $\lambda>0$ there exists a function $w:X \to \mbb R$ such that both $\mcl M$ and $\phi_1(\mcl M, \lambda)$ satisfy Property \ref{ass: terminal} using $w$.
	
\end{rem}

In the next lemma we will give the Lipschitz properties of the OCTGF of a $\lambda$-smoothed MDP.
\begin{lem} \label{lem: cost to go has bounded w norm}
	For some $\lambda>0$ consider the OCTGF's $J_t$ and $\tilde{J}_t$ of the MDP's $\mathcal{M}
	\in \mathbb{A}$ and  $\tilde{\mathcal{M}}
	= \Phi_1(\mathcal{M}, \lambda)$ respectively. Then $||J_t||_{w}<\infty$ and $\tilde{J}_t \in \mathbb{L}_w(X)$, where $w:X \to \mathbb{R}$ is as in Property \ref{ass: terminal} for $\mathcal{M}$. Furthermore,
	\begin{align} \label{eqn: recursion eqaution that defines Lip constant of J tilde} L_{\tilde{J}_{ t}}=[L_c+L_q L_{\tilde{J}_{t+1}}] [1 + L_{\psi}]  \\
	 L_{\tilde{J}_{T}}= L_{\tilde{h}_\lambda} + L_h. \nonumber \end{align}
	
\end{lem}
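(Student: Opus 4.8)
The plan is to prove both assertions by backward induction on $t$, running from $t=T$ down to $t=0$, using Bellman's equation $J_{\mathcal{M},t}(x)=\inf_{a\in\psi(x)}\{c(x,a)+\zeta^{\mathcal{M}}_{J_{t+1},t}(x,a)\}$ and its analogue for $\tilde J_t$. A preliminary observation is that, by Remark~\ref{rem: first map sends aproximable MDP to aproximable MDP}, $\tilde{\mathcal{M}}=\Phi_1(\mathcal{M},\lambda)\in\mathbb{A}$ and shares the same weight $w$ as $\mathcal{M}$; hence Properties~\ref{ass: terminal},~\ref{ass: zeta of w inequlity} and~\ref{property: Zeta is lipshitz} may be invoked for both MDPs. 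The two statements $\|J_t\|_w<\infty$ and $\tilde J_t\in\mathbb{L}_w(X)$ are then handled in parallel: the weighted-norm bound for each MDP, and the Lipschitz bound only for $\tilde{\mathcal{M}}$, whose terminal cost $h_1+\tilde h_\lambda$ is Lipschitz by Corollary~\ref{cor: h tilde Lipschitz.} (unlike $h$ itself).

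For the weighted-norm bound, at $t=T$ Property~\ref{ass: terminal} gives $|h(x)|\le|h_1(x)|+\|h_2\|_\infty<\bar c\,w(x)$, so $\|J_T\|_w\le\bar c$, and the same holds for $\tilde J_T=h_1+\tilde h_\lambda$ since $\tilde h_\lambda$ is bounded. For the inductive step, suppose $\|J_{t+1}\|_w=:M<\infty$, so $|J_{t+1}(y)|\le M w(y)$. Then $|\zeta^{\mathcal{M}}_{J_{t+1},t}(x,a)|\le M\,\zeta^{\mathcal{M}}_{w,t}(x,a)\le M\bar d\,w(x)$ by Property~\ref{ass: zeta of w inequlity}, while $|c(x,a)|\le\bar c\,w(x)$ by Property~\ref{ass: terminal}; taking the infimum over $a\in\psi(x)$ yields $|J_t(x)|\le(\bar c+M\bar d)w(x)$, whence $\|J_t\|_w<\infty$. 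The identical estimate, applied to $\tilde{\mathcal{M}}$, gives $\|\tilde J_t\|_w<\infty$.

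For the Lipschitz recursion I would first show that $F(x,a):=c(x,a)+\zeta^{\tilde{\mathcal{M}}}_{\tilde J_{t+1},t}(x,a)$ is jointly Lipschitz on $\mathbb{K}_t$. Indeed $c$ has Lipschitz constant $L_c$ by Property~\ref{ass: terminal}, and since $\tilde J_{t+1}$ is Lipschitz with constant $L_{\tilde J_{t+1}}$ by the inductive hypothesis, Property~\ref{property: Zeta is lipshitz} bounds $|\zeta^{\tilde{\mathcal{M}}}_{\tilde J_{t+1},t}(x,a)-\zeta^{\tilde{\mathcal{M}}}_{\tilde J_{t+1},t}(y,b)|$ by $L_q L_{\tilde J_{t+1}}(d_X(x,y)+d_A(a,b))$; summing, $F$ has constant $L_F:=L_c+L_q L_{\tilde J_{t+1}}$. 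It then remains to transfer this to $\tilde J_t(x)=\inf_{a\in\psi(x)}F(x,a)$. Fixing $x,y$, compactness of $\psi(y)$ (Property~\ref{property: psi is compact for each x}) together with continuity of $F(y,\cdot)$ (both summands being continuous in $a$) yields a minimizer $b^\ast\in\psi(y)$ with $F(y,b^\ast)=\tilde J_t(y)$; Property~\ref{property: psi is lipshitz} gives $d_H(\psi(x),\psi(y))\le L_\psi d_X(x,y)$, so, using compactness of $\psi(x)$ again, there is $a^\ast\in\psi(x)$ with $d_A(a^\ast,b^\ast)\le L_\psi d_X(x,y)$. Then $\tilde J_t(x)-\tilde J_t(y)\le F(x,a^\ast)-F(y,b^\ast)\le L_F(1+L_\psi)d_X(x,y)$, and the symmetric estimate gives $L_{\tilde J_t}=(L_c+L_q L_{\tilde J_{t+1}})(1+L_\psi)$; the base case $\tilde J_T=h_1+\tilde h_\lambda$ delivers $L_{\tilde J_T}=L_{\tilde h_\lambda}+L_h$, where $L_h$ denotes the Lipschitz constant of $h_1$.

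The \emph{main obstacle} is this last step: controlling the infimum when the feasible set $\psi(x)$ itself varies with $x$, since one cannot directly bound $|\inf_a F(x,a)-\inf_a F(y,a)|$ by $L_F$ when the two infima are taken over different sets. Selecting an optimal $b^\ast\in\psi(y)$ and transporting it to a nearby $a^\ast\in\psi(x)$ via the Hausdorff-Lipschitz estimate of Property~\ref{property: psi is lipshitz} is exactly the device that resolves this, and it is what produces the extra factor $(1+L_\psi)$ in the recursion; the compactness in Property~\ref{property: psi is compact for each x} guarantees the required minimizers exist, so no limiting argument is needed.
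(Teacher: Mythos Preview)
Your proof is correct and is essentially the standard backward-induction argument for this result: bound $\|J_t\|_w$ via Property~\ref{ass: zeta of w inequlity}, and propagate Lipschitz continuity of $\tilde J_t$ by combining Property~\ref{property: Zeta is lipshitz} for $\zeta$ with the Hausdorff-Lipschitz transport of a minimizer from $\psi(y)$ to $\psi(x)$, which is exactly what yields the factor $(1+L_\psi)$. The paper does not prove this lemma at all; its proof reads in full ``See Lemma~2.5 in \cite{F_Dufour_2012}.'' Your argument is precisely the one underlying that cited lemma, so you have supplied the details the paper omits. One very minor remark: when you invoke continuity of $F(y,\cdot)$ to obtain a minimizer $b^\ast$, the cleanest justification is Property~\ref{property: Zeta is lipshitz} itself (which already gives Lipschitz continuity of $\zeta^{\tilde{\mathcal M}}_{\tilde J_{t+1},t}$ in $a$), rather than Property~\ref{property: zeta is continuous}, since $\tilde J_{t+1}$ need not be bounded.
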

\begin{proof}
See Lemma 2.5 in \cite{F_Dufour_2012}. \end{proof}

\begin{cor} \label{cor: explicit lipschitz constant for J tilde} Consider the OCTGF, $\tilde{J}_{t}$, of a MDP $\tilde{\mathcal{M}} = \Phi_1(\mathcal{M}, \lambda)$ for some $\mathcal{M} \in \mathbb{A}$. Then its Lipschitz constant, $L_{\tilde{J}_{ t}}>0$ satisfies,\\
\small{	$L_{\tilde{J}_{ t}}=(L_q[1 + L_{\psi}])^{T-t}[L_{f_{1}}+L_{f_{2}} + 2 \lambda m\max\{||f_1||_{\infty},||f_2||_{\infty}\} + L_h] + L_c[1+L_{\psi}] \sum_{i=1}^{T-t} (L_q[1 + L_{\psi}])^{i-1} \quad \forall t \in [T]$.}
\normalsize
\end{cor}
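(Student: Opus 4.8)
The plan is to treat this as a purely algebraic consequence of two results already established: the backward recursion for the Lipschitz constant in Lemma~\ref{lem: cost to go has bounded w norm} and the explicit terminal value from Corollary~\ref{cor: h tilde Lipschitz.}. Lemma~\ref{lem: cost to go has bounded w norm} gives $L_{\tilde{J}_{t}}=[L_c+L_q L_{\tilde{J}_{t+1}}][1+L_{\psi}]$ together with the terminal condition $L_{\tilde{J}_{T}}=L_{\tilde{h}_\lambda}+L_h$. First I would substitute the closed form $L_{\tilde{h}_\lambda}=L_{f_{1}}+L_{f_{2}}+2\lambda m\max\{\|f_1\|_{\infty},\|f_2\|_{\infty}\}$ from Corollary~\ref{cor: h tilde Lipschitz.} into the terminal condition, so that $L_{\tilde{J}_{T}}=L_{f_{1}}+L_{f_{2}}+2\lambda m\max\{\|f_1\|_{\infty},\|f_2\|_{\infty}\}+L_h$, which is exactly the bracketed factor that multiplies $(L_q[1+L_{\psi}])^{T-t}$ in the claimed formula. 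It then remains only to solve the scalar recursion.

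The recursion is affine and first-order in the backward time index, so the cleanest route is to abbreviate $\alpha:=L_q[1+L_{\psi}]$ and $\beta:=L_c[1+L_{\psi}]$, reducing Lemma~\ref{lem: cost to go has bounded w norm} to $L_{\tilde{J}_{t}}=\alpha L_{\tilde{J}_{t+1}}+\beta$. I would then prove the closed form $L_{\tilde{J}_{t}}=\alpha^{T-t}L_{\tilde{J}_{T}}+\beta\sum_{i=1}^{T-t}\alpha^{i-1}$ by induction on $k:=T-t$. The base case $t=T$ holds since the empty sum vanishes and $\alpha^{0}=1$, returning $L_{\tilde{J}_{T}}$. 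For the inductive step, assuming the formula at $t+1$, one applies $L_{\tilde{J}_{t}}=\alpha L_{\tilde{J}_{t+1}}+\beta$, multiplies the geometric sum by $\alpha$ to shift its index, and absorbs the extra $\beta$ as the $i=1$ term, recovering $\beta\sum_{i=1}^{T-t}\alpha^{i-1}$. Re-expanding $\alpha$ and $\beta$ and inserting the terminal value from the first paragraph then yields precisely the stated expression.

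I do not anticipate a genuine obstacle here, since the argument is a standard unrolling of a geometric recurrence; the only place demanding care is the index bookkeeping in the geometric sum, specifically ensuring the offset $\sum_{i=1}^{T-t}\alpha^{i-1}$ (rather than $\sum_{i=0}^{T-t-1}\alpha^{i}$) is carried consistently through the inductive shift so that the base and inductive cases agree. One should also note implicitly that Remark~\ref{rem: first map sends aproximable MDP to aproximable MDP} guarantees $\tilde{\mathcal{M}}=\Phi_1(\mathcal{M},\lambda)\in\mathbb{A}$, so that $L_c$, $L_q$, and $L_{\psi}$ are the finite constants inherited from $\mathcal{M}$ and the recursion of Lemma~\ref{lem: cost to go has bounded w norm} is legitimately applicable at every step.
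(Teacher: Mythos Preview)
Your proposal is correct and is exactly the approach the paper intends: Corollary~\ref{cor: explicit lipschitz constant for J tilde} is stated without proof because it is the straightforward closed-form solution of the affine recursion \eqref{eqn: recursion eqaution that defines Lip constant of J tilde} from Lemma~\ref{lem: cost to go has bounded w norm} with the terminal value supplied by Corollary~\ref{cor: h tilde Lipschitz.}. Your substitution $\alpha=L_q[1+L_\psi]$, $\beta=L_c[1+L_\psi]$ and the backward induction on $T-t$ carry this out cleanly.
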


The next Proposition proves that the OCTGF for a $\lambda$-smoothed MDP converges to the OCTGF of its corresponding approximable MDP under the supremum norm as $\lambda \to \infty$.

\begin{prop} \label{prop: J and J tilde arbitrarily close}
	Consider an MDP $\mathcal{M}=((\{X_t\}_{t \in \mathbb{N}},X),\mathcal{A},\psi, \{Q_t\}_{t\in \mathbb{N}}, (c,h),T) \in \mathbb{A}$ and its corresponding $\lambda$-smoothed MDP $\tilde{\mathcal{M}}=((\{X_t\}_{t \in \mathbb{N}},X),\mathcal{A},\psi, \{Q_t\}_{t\in \mathbb{N}}, (c,h_1 + \tilde{h}_\lambda),T)=\Phi_1(\mathcal{M}, \lambda)$ with OCTGF's denoted by $J_{t}(x)$ and $\tilde{J}_{t}(x)$ respectively. Then for $\theta>0$ there exists $\Lambda \in \mathbb{R}$ and $w:X \to \mathbb{R}$ such that $||J_t||_{w}< \infty$, $\tilde{J}_t \in \mathbb{L}_w(X)$ and for all  $\lambda>\Lambda$ we have $\sup_{x \in X} \left|\tilde{J}_{t}(x)- J_{t}(x) \right|<  \left(||J_T||_w+ ||\tilde{J}_T||_w \right) \theta$ for any $t \in [T-1]$.
\end{prop}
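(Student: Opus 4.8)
The plan is to proceed by backward induction on $t$, running from the terminal time $T$ downward, and to exploit throughout that $\mathcal{M}$ and $\tilde{\mathcal{M}}=\Phi_1(\mathcal{M},\lambda)$ share the same per-stage cost $c$, the same kernels $\{Q_t\}$, and the same feasibility map $\psi$, differing only in their terminal cost. First I would invoke Lemma \ref{lem: cost to go has bounded w norm} together with Remark \ref{rem: first map sends aproximable MDP to aproximable MDP} to fix a single weight $w$ for which simultaneously $\|J_t\|_w<\infty$ and $\tilde{J}_t\in\mathbb{L}_w(X)$, and set $C:=\|J_T\|_w+\|\tilde{J}_T\|_w$. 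Given $\theta>0$, I take $\Lambda$ to be exactly the threshold furnished by Property \ref{ass: proability transitions to region tends to 0} for this $\theta$ and this $w$.

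For the base of the induction I would analyze the terminal difference. Because both MDPs carry the same Lipschitz summand $h_1$, that (possibly unbounded) term cancels and $\tilde{J}_T-J_T=\tilde{h}_\lambda-h_2$, which is therefore a genuinely bounded function. By the construction of $g_{\lambda,b}$ and the definition of $\mathcal{R}_{\lambda,b}$, this difference vanishes off $\mathcal{R}_{\lambda,b}$; on $\mathcal{R}_{\lambda,b}$ the definition of the weighted norm yields the pointwise estimate $|\tilde{J}_T(y)-J_T(y)|\le(\|J_T\|_w+\|\tilde{J}_T\|_w)\,w(y)=C\,w(y)$. The key single backward step is $t=T-1$, where Property \ref{ass: proability transitions to region tends to 0} enters. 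Applying the elementary bound $|\inf_a f(a)-\inf_a g(a)|\le\sup_a|f(a)-g(a)|$ to the Bellman recursion \eqref{bellman}, the shared cost $c(x,a)$ cancels and one obtains $|\tilde{J}_{T-1}(x)-J_{T-1}(x)|\le\sup_{a\in\psi(x)}|\zeta^{\mathcal{M}}_{\tilde{J}_T-J_T,\,T-1}(x,a)|\le\sup_a\int_{X_T}|\tilde{J}_T(y)-J_T(y)|\,Q_{T-1}(dy|x,a)$. Restricting the integral to $\mathcal{R}_{\lambda,b}$ (outside of which the integrand is zero) and inserting the pointwise bound $C\,w(y)$ gives $\sup_x|\tilde{J}_{T-1}(x)-J_{T-1}(x)|\le C\sup_{x,a}\int_{\mathcal{R}_{\lambda,b}}w(y)\,Q_{T-1}(dy|x,a)<C\theta$ for every $\lambda>\Lambda$, by Property \ref{ass: proability transitions to region tends to 0}.

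The remaining inductive steps for $t<T-1$ require no further appeal to Property \ref{ass: proability transitions to region tends to 0}. Assuming $\sup_x|\tilde{J}_{t+1}(x)-J_{t+1}(x)|<C\theta$, the same $\inf$/$\sup$ estimate gives $|\tilde{J}_t(x)-J_t(x)|\le\sup_a\int_{X_{t+1}}|\tilde{J}_{t+1}(y)-J_{t+1}(y)|\,Q_t(dy|x,a)\le\sup_y|\tilde{J}_{t+1}(y)-J_{t+1}(y)|<C\theta$, since each $Q_t$ is a probability measure and hence integrates to $1$. This propagates the uniform bound down to every $t\in[T-1]$, yielding the stated conclusion $\sup_x|\tilde{J}_t(x)-J_t(x)|<(\|J_T\|_w+\|\tilde{J}_T\|_w)\theta$.

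The hard part will be the terminal step: one must correctly combine the support restriction to $\mathcal{R}_{\lambda,b}$ with the weighted pointwise bound so that Property \ref{ass: proability transitions to region tends to 0} applies with the weight $w$ sitting inside the integral — this is precisely why the weighted norm, rather than a crude sup bound, is needed, since $w$ may fail to be bounded below. The amplification-free propagation for $t<T-1$ is then essentially automatic from the kernels being probability measures; the only subtlety to check there is that the difference has become genuinely bounded, which holds because the unbounded term $h_1$ cancels already at $t=T$.
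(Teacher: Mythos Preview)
Your argument is correct and, in fact, more streamlined than the paper's own proof. Both proofs proceed by backward induction, invoke Lemma~\ref{lem: cost to go has bounded w norm} for the weighted-norm bounds, localize the terminal difference to $\mathcal{R}_{\lambda,b}$, and apply Property~\ref{ass: proability transitions to region tends to 0} only at the step $t=T-1$. The difference is in how the two infima in Bellman's equation are compared. The paper uses the bound
\[
|\inf_u F(u)-\inf_a G(a)|\le \inf_u\sup_a|F(u)-G(a)|\ \lor\ \inf_a\sup_u|F(u)-G(a)|,
\]
which forces the two actions $u,a$ to differ; the resulting $d_A(u,a)$ contributions are then controlled via Property~\ref{property: Zeta is lipshitz} and the Lipschitz constant of $c$, and finally eliminated by observing $d_H(\psi(x),\psi(x))=0$. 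You instead use the elementary estimate $|\inf_a F(a)-\inf_a G(a)|\le\sup_a|F(a)-G(a)|$, which keeps the same action throughout so that $c(x,a)$ cancels immediately and no appeal to Property~\ref{property: Zeta is lipshitz} or to the Hausdorff distance is needed. Your handling of the integrand as the single difference $\tilde J_T-J_T=\tilde h_\lambda-h_2$, supported on $\mathcal{R}_{\lambda,b}$ and bounded pointwise by $(\|J_T\|_w+\|\tilde J_T\|_w)\,w$, is also cleaner than the paper's split via separate triangle-inequality pieces. The paper's route is not wrong, but the detour through the Lipschitz estimate is unnecessary for this proposition; your approach isolates exactly the mechanism that drives the result.
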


\begin{proof}
	Consider $w:X \to \mathbb{R}$ as in Property \ref{ass: terminal} of $\mathcal{M}$ then by Lemma \ref{lem: cost to go has bounded w norm} $||J_t||_{w}< \infty$, $\tilde{J}_t \in \mathbb{L}_w(X)$.
	 For $t \in [T-1]$ using Bellman's equation \eqref{bellman} we have,
	\small{
	\begin{align} \label{eqn: initial J and J tilde inequality}
	&\left|\tilde{J}_{t-1}(x)- J_{t-1}(x)\right|  \\ \nonumber
	 & \le \left( \inf_{u \in \psi(x)} \sup_{a \in \psi(x)} \left|\int_{X_{T}} \tilde{J}_{t}(y)Q_{t-1}(dy|x,u) -  \int_{X_{T}} J_{t}(y)Q_{t-1}(dy|x,a) \right| \right. \\ \nonumber
	 & \left. \qquad \lor  \inf_{a \in \psi(x)} \sup_{u \in \psi(x)} \left|\int_{X_{T}} \tilde{J}_{t}(y)Q_{t-1}(dy|x,u) -  \int_{X_{T}} J_{t}(y)Q_{t-1}(dy|x,a) \right| \right) \\ \nonumber
	 & \quad + \left( \inf_{u \in \psi(x)} \sup_{a \in \psi(x)}\left|c(x,u)-c(x,a)\right| \lor \inf_{a \in \psi(x)} \sup_{u \in \psi(x)} \left|c(x,u)-c(x,a)\right| \right) \nonumber
	\end{align} }
\normalsize
We now proceed by downward induction starting at $t=T-1$.
Let $\theta>0$, by Property \ref{ass: proability transitions to region tends to 0} of $\mathcal{M}$ $\exists \Lambda>0$ such that $\forall \lambda > \Lambda$
\vspace{-0.15cm}
\small
\begin{align*}
\left|sup_{x \in X,u \in \psi(x)} \int_{y \in \mathcal{R}_{\lambda,B}} w(y) Q_{T-1}(dy|x,u)\right|< \theta
\end{align*}
\normalsize
For $\lambda > \Lambda$ we see,
\small
\begin{align} \label{eqn: intial expected J and J tilde inequality}
 & \left|\int_{X_T} \tilde{J}_{T}(y)Q_{T-1}(dy|x,u) -  \int_{X_T} J_{T}(y)Q_{T-1}(dy|x,a) \right|\\ \nonumber
 & \le  \left|\zeta_{h_1+\tilde{h}_{\lambda},T}^{\mathcal{M}}(x,u) - \zeta_{h_1+\tilde{h}_{\lambda},T}^{\mathcal{M}}(x,a)\right|\\ \nonumber
 & \qquad +\left|\int_{X_T} \tilde{J}_{T}(y)Q_{T-1}(dy|x,a)  -  \int_{X_T} J_{T}(y)Q_{T-1}(dy|x,a)\right|\\ \nonumber
 & \le L_{q}(L_{h_1}+L_{\tilde{h}_{\lambda}})d_A(u,a)\\ \nonumber
 & \quad  + \left|\int_{X_T} \tilde{J}_{T}(y)Q_{T-1}(dy|x,a)  -  \int_{X_T} J_{T}(y)Q_{T-1}(dy|x,a)\right| \\ \nonumber
 & \le L_{q}(L_{h_1}+L_{\tilde{h}_{\lambda}})d_A(u,a) + \left|\int_{y \in \mathcal{R}_{\lambda,b}} \tilde{J}_{T}(y)Q_{T-1}(dy|x,a)\right|  \\ \nonumber
 & \hspace{0.15cm} +  \left| \int_{y \in \mathcal{R}_{\lambda,b}} J_{T}(y)Q_{T-1}(dy|x,a)\right| \\ \nonumber
 & \hspace{0.15cm} + \left|\int_{y \in X_T / \mathcal{R}_{\lambda,b}} \tilde{J}_{T}(y)Q_{T-1}(dy|x,a) -  \int_{y \in {X_T} / \mathcal{R}_{\lambda,b}} J_{T}(y)Q_{T-1}(dy|x,a) \right| \\ \nonumber
 & \le L_{q}(L_{h_1}+L_{\tilde{h}_{\lambda}})d_A(u,a)\\ \nonumber
 & \quad + (||J_T||_w+ ||\tilde{J}_T||_w ) \left|\int_{y \in \mathcal{R}_{\lambda,b}}w(y) Q_{T-1}(dy|x,a)\right|\\ \nonumber
 & \quad  + \left|\int_{y \in X_T / \mathcal{R}_{\lambda,b}} [(h_1(y) + \tilde{h}_\lambda(y))- h(y)]Q_{T-1}(dy|x,a)\right|\\ \nonumber
  & \le L_{q}(L_{h_1}+L_{\tilde{h}_{\lambda}})d_A(u,a) + (||J_T||_w+ ||\tilde{J}_T||_w ) \theta.  \\  \nonumber    
\end{align}

 \vspace{-0.7cm} \normalsize{Where the triangle inequality is used in the first and third inequality, Property \ref{property: Zeta is lipshitz} is used in the second inequality, Property \ref{ass: terminal} is used in the fourth inequality and in the fifth inequality Property \ref{ass: proability transitions to region tends to 0} and the fact $h(y)=h_1(y) + \tilde{h}_{\lambda}(y)$ $\forall y \in X/ \mcl R_{\lambda,b}$ is used.}\\
Also by Property \ref{ass: terminal} of $\mcl M$, \begin{equation} \label{eqn: cost inequality} |c(x,u)-c(x,a)|\le L_c d_A(u,a). \end{equation}
Moreover, \begin{align} \label{eqn: d(_A(u,a) = 0)} \inf_{u \in \psi(x)} \sup_{a \in \psi(x)} d_A(u,a) \lor \inf_{a \in \psi(x)} \sup_{u \in \psi(x)} d_A(u,a) \\
= d_H(\psi(x),\psi(x))=0 \nonumber .\end{align}
Thus it follows by substituting $t = T$ into \eqref{eqn: initial J and J tilde inequality} and further using \eqref{eqn: intial expected J and J tilde inequality}, \eqref{eqn: cost inequality} and \eqref{eqn: d(_A(u,a) = 0)},
\begin{equation*}
\sup_{x \in X}\left|\tilde{J}_{\lambda,T-1}(x)- J_{T-1}(x)\right| \le  (||J_T||_w+ ||\tilde{J}_T||_w ) \theta.
\end{equation*}
Now we proceed by downward induction. Assuming the result to be true for $s+1$, $\exists \Lambda$ such that $\forall \lambda> \Lambda$ we have $\sup_{x \in X} \left|\tilde{J}_{s+1}(x)- J_{s+1}(x) \right|<  \left(||J_T||_w+ ||\tilde{J}_T||_w \right) \theta$. Now for $\lambda> \Lambda$,

%
\small{
	\begin{align} \label{eqn: 2 intial expected J and J tilde inequality}
	& \left|\int_{X_{s+1}} \tilde{J}_{s+1}(y)Q_{s}(dy|x,u) -  \int_{X_{s+1}} J_{s+1}(y)Q_{s}(dy|x,a)\right|\\ \nonumber
	& \le \left|\zeta_{\tilde{J}_{s+1},s}^{\mathcal{M}}(x,u)-\zeta_{\tilde{J}_{s+1},s}^{\mathcal{M}}(x,a)\right| + \int_{X_{s+1}} \left|\tilde{J}_{s+1}(y)- J_{s+1}(y)\right|Q_{s}(dy|x,a) \\ \nonumber
	& \le L_{P}L_{\tilde{J}_{s+1}}d_A(u,a) +  \sup_{y \in X}|\tilde{J}_{s+1}(y)- J_{s+1}(y)| \int_{X_{s+1}} Q_{s}(dy|x,a)\\ \nonumber
	& \le L_{P}L_{\tilde{J}_{s+1}}d_A(u,a) + (||J_T||_w+ ||\tilde{J}_T||_w ) \theta. \nonumber
	\end{align}
}
\normalsize
Where the first inequality uses the triangle rule, the second inequality uses Property \ref{property: Zeta is lipshitz} and the third inequality uses the induction hypothesis.\\
Thus it follows by substituting $t = s+1$ into \eqref{eqn: initial J and J tilde inequality} and further using \eqref{eqn: 2 intial expected J and J tilde inequality}, \eqref{eqn: cost inequality} and \eqref{eqn: d(_A(u,a) = 0)},
\begin{align*}
 |\tilde{J}_{s}(x)- J_{s}(x)|
 & \le (||J_T||_w+ ||\tilde{J}_T||_w ) \theta. 
\end{align*} \end{proof}

\subsection{Discretization}
In this section we show how to mathematically discretize a $\lambda$-smoothed readily-aporximable MDP. We will show the discretized MDP can be made arbitrarily close to the $\lambda$-smoothed MDP.

\begin{defn}
		For any MDP $\tilde{\mathcal{M}}=((\{X_t\}_{t \in \mathbb{N}},X),\mathcal{A},\psi, \{Q_t\}_{t\in \mathbb{N}}, (c,h),T) \in \Phi_1(\mathbb{A}, \mathbb{R}^+)$ and $\epsilon>0$ we define $\mathbb{H}_{\tilde{\mathcal{M}},\alpha }$ as a set of compact subsets of $X$  where we say $\{H_t\}_{t \in [T]}\in \mathbb{H}_{\tilde{\mathcal{M}}, \alpha}$ if
		\vspace{-0.05cm}
		\begin{itemize}
			\item $H_t$ is a compact subset of $X_t$ for all $t \in [T]$ and
			\item $\sup_{(x,a) \in \mathbb{J}_{t-1}} \int_{X/H_{t}} w(y) Q_{t-1}(dy|x,a) \le \alpha \text{ for all } t \in [T]  $, where $\mbb J_{t-1} = \{(x,a): x \in H_{t-1}, a \in \psi(x)\}$ and $w:X \to \mathbb{R}$ is as in Property \ref{ass: terminal} for some $\mathcal{M}$ such that $\tilde{\mathcal{M}}=\Phi_1(\mcl M,\lambda)$ for some $\lambda >0$.
		\end{itemize}
		
\end{defn}
\begin{lem} \label{eps_def}
	$\forall \tilde{\mathcal{M}} \in \Phi_1(\mathbb{A}, \mathbb{R}^+)$ and $\alpha>0$ $\mathbb{H}_{\tilde{\mathcal{M}}, \alpha} \ne \emptyset$.
\end{lem}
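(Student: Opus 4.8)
The plan is to construct the family $\{H_t\}_{t\in[T]}$ by forward induction in $t$, reducing the whole statement to a single tightness sub-claim. By Remark~\ref{rem: first map sends aproximable MDP to aproximable MDP}, $\tilde{\mcl M}\in\mbb A$, so it satisfies Properties~\ref{property: State space condition}--\ref{ass: proability transitions to region tends to 0}, and the weight $w$ appearing in the definition of $\mbb H_{\tilde{\mcl M},\alpha}$ may be taken to be the one from Property~\ref{ass: terminal}; note $X=\R^m$ and the kernels $Q_t$ of $\tilde{\mcl M}$ and $\mcl M$ coincide. For the base case take $H_0$ to be any compact subset of $X$ (e.g.\ a closed ball), the $t=0$ transition constraint being vacuous. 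The inductive step is then exactly the following sub-claim: \emph{given a compact set $\mbb J\subset\mbb K_{t-1}$ and $\alpha>0$, there is a compact $H\subset X$ with $\sup_{(x,a)\in\mbb J}\int_{X\setminus H}w(y)\,Q_{t-1}(dy|x,a)\le\alpha$}, applied with $\mbb J=\mbb J_{t-1}=\{(x,a):x\in H_{t-1},\,a\in\psi(x)\}$ to produce $H_t:=H$.

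First I would check that $\mbb J_{t-1}$ is compact whenever $H_{t-1}$ is. This is the graph of the correspondence $\psi$ over the compact set $H_{t-1}$: by Property~\ref{property: psi is compact for each x} $\psi$ is compact-valued and by Property~\ref{property: psi is lipshitz} it is Lipschitz, hence continuous, in the Hausdorff metric, so it has closed graph and $\bigcup_{x\in H_{t-1}}\psi(x)$ is bounded (Lipschitz continuity bounds $d_H(\psi(x),\psi(x_0))$ by $L_\psi\,\mathrm{diam}(H_{t-1})$); thus $\mbb J_{t-1}$ is a closed subset of a compact set, and so compact. (When $\mcl A=\emptyset$ this is immediate, since $\mbb J_{t-1}=H_{t-1}$ by the convention in Definition~\ref{def: MDP}.)

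The heart of the argument is the tightness sub-claim, which I would prove by a Dini-type argument. Fix the compact $\mbb J$, set $U_n:=\{y\in\R^m:\|y\|_2<n\}$ and define
\[
\Psi_n(x,a):=\int_{X\setminus U_n}w(y)\,Q_{t-1}(dy|x,a)=\zeta^{\tilde{\mcl M}}_{w,t-1}(x,a)-\int_{U_n}w(y)\,Q_{t-1}(dy|x,a).
\]
The $\Psi_n$ are nonnegative and decrease pointwise to $0$: since $\zeta^{\tilde{\mcl M}}_{w,t-1}(x,a)\le\bar d\,w(x)<\infty$ by Property~\ref{ass: zeta of w inequlity} and $U_n\uparrow X$, monotone convergence gives $\Psi_n(x,a)\downarrow 0$. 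Each $\Psi_n$ is upper semicontinuous: $\zeta^{\tilde{\mcl M}}_{w,t-1}$ is upper semicontinuous by Property~\ref{ass: zeta of w inequlity}, while $w\mathbbm{1}_{U_n}$ is a nonnegative lower semicontinuous function (product of the positive lower semicontinuous $w$ from Property~\ref{ass: terminal} with the indicator of an open set), so by Property~\ref{property: zeta is continuous} (weak continuity of $Q_{t-1}$) together with the portmanteau theorem the map $(x,a)\mapsto\int_{U_n}w\,dQ_{t-1}$ is lower semicontinuous, and subtracting it preserves upper semicontinuity. Now, if the sub-claim failed for all balls, i.e.\ $\sup_{\mbb J}\Psi_n>\alpha$ for every $n$, I would choose $(x_n,a_n)\in\mbb J$ with $\Psi_n(x_n,a_n)>\alpha$ and a convergent subsequence $(x_{n_k},a_{n_k})\to(x^*,a^*)\in\mbb J$; for each fixed $m$ and $n_k\ge m$ monotonicity gives $\Psi_m(x_{n_k},a_{n_k})\ge\Psi_{n_k}(x_{n_k},a_{n_k})>\alpha$, and upper semicontinuity of $\Psi_m$ then yields $\Psi_m(x^*,a^*)\ge\alpha$ for all $m$, contradicting $\Psi_m(x^*,a^*)\to 0$. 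Hence some $N$ satisfies $\sup_{\mbb J}\Psi_N\le\alpha$, and taking $H:=\overline{U_N}$ (compact) gives $\int_{X\setminus H}w\,dQ_{t-1}\le\int_{X\setminus U_N}w\,dQ_{t-1}=\Psi_N\le\alpha$ on $\mbb J$, proving the sub-claim.

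I expect the main obstacle to be the semicontinuity bookkeeping in the tightness step: the tail functions must be arranged to be upper semicontinuous so that the Dini-type subsequence argument closes, which is why I would work with the open balls $U_n$ (making $w\mathbbm{1}_{U_n}$ lower semicontinuous and hence $\int_{U_n}w\,dQ_{t-1}$ lower semicontinuous via the Feller property of Property~\ref{property: zeta is continuous}) and only pass to the compact closure $\overline{U_N}$ at the very end. Upgrading the pointwise decay $\Psi_n\downarrow0$ to a bound uniform over the compact $\mbb J$ is the essential point; the forward induction and the compactness of $\mbb J_{t-1}$ are routine. This construction parallels the corresponding one in~\cite{F_Dufour_2012}.
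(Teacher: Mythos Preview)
Your argument is correct. The paper does not give an independent proof here at all: it simply observes that $\Phi_1(\mathbb A,\mathbb R^+)\subset\mathbb A$ (your Remark~\ref{rem: first map sends aproximable MDP to aproximable MDP}) and then invokes Lemma~2.9 of~\cite{F_Dufour_2012}. What you have written is a self-contained proof of that cited lemma in the present setting --- forward induction in $t$, compactness of $\mbb J_{t-1}$ from Properties~\ref{property: psi is compact for each x}--\ref{property: psi is lipshitz}, and a Dini-type uniform-tightness step using Properties~\ref{ass: terminal}--\ref{property: zeta is continuous} --- and you yourself note the parallel at the end. So your route is not really different from the paper's; it is a fleshed-out version of the external reference the paper defers to.

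One small remark on the compactness of $\mbb J_{t-1}$: in a general locally compact metric space $\mathcal A$, ``closed and bounded'' need not imply compact, so the cleanest way to finish that step is to argue directly that $\mbb J_{t-1}$ is sequentially compact (any $(x_n,a_n)$ with $x_n\to x$ in $H_{t-1}$ has, by Hausdorff continuity of $\psi$ and compactness of $\psi(x)$, a subsequence with $a_{n_k}\to a\in\psi(x)$), rather than embedding it in a product. This is a phrasing issue only; the substance of your proof is sound.
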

\begin{proof}
	We note $\Phi_1(\mathbb{A}, \mathbb{R}^+)$ is a subset of $\mathbb{A}$ and thus the result of the Lemma follows from Lemma 2.9 \cite{F_Dufour_2012}.
\end{proof}
Consider $\alpha>0$. For some $\{H_t\}_{t \in [T]} \in \mathbb{H}_{\tilde{\mathcal{M}}, \alpha}$ let us denote the map $\Phi_2: X^\cup \times \Phi_1(\mathbb{A}, \mathbb{R}^+) \to \mathbb{M}$ such that for $\tilde{\mathcal{M}}=((\{X_t\}_{t \in \mathbb{N}},X),\mathcal{A},\psi, \{Q_t\}_{t\in \mathbb{N}}, (c,h),T) \in \Phi_1(\mathbb{A}, \mathbb{R}^+)$ we have $\Phi_2(\{H_t\}_{t \in [T]},\tilde{\mathcal{M}} )=((\{H_t\}_{t \in [T]},X),\mathcal{A},\psi, \{Q_t\}_{t\in \mathbb{N}}, (c,h),T)$.

\begin{prop} \label{prop:K_t lemma}
	Denote the MDP with tuple elements \eqref{1} to \eqref{6} by $\mathcal{M}$. For any $\lambda>1$ let $\alpha=\frac{\sqrt{2}m \max|a_{i,t}|}{\sqrt{ \log(\lambda)}} (\frac{1}{\lambda})^{\frac{1}{\max a_{i,t}^2}}$ and consider the family of sets $H_t:=\gamma_{t, \lambda}^m \subset \mathbb{R}^m$ where $\gamma_{t, \lambda}=[-t\sqrt{2\log(\lambda)},t\sqrt{2\log(\lambda)}]$, then
	$\{H_t\}_{t \in [T] } \in \mathbb{H}_{\phi_1(\mcl M, \lambda), \alpha  }$.
	
\end{prop}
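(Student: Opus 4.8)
The plan is to verify directly the two conditions in the definition of $\mathbb{H}_{\Phi_1(\mathcal{M},\lambda),\alpha}$ for the proposed family $\{H_t\}_{t\in[T]}$. By Proposition~\ref{prop: proves our polytope MDP is readily-aproximable} this MDP has $\mathcal{A}=\emptyset$, $\psi(x)=\emptyset$, and we may take $w\equiv 1$, so that $\mathbb{J}_{t-1}$ is identified with $H_{t-1}$ and the membership requirement reduces to: (i) each $H_t$ is a compact subset of $X_t=\mathbb{R}^m$, and (ii) $\sup_{x\in H_{t-1}}\int_{\mathbb{R}^m\setminus H_t}Q_{t-1}(dy\mid x)\le\alpha$ for every transition step $t$. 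Condition (i) is immediate, since $H_t=\gamma_{t,\lambda}^m$ is a product of closed bounded intervals, hence a compact cube (and $H_0=\{0\}$ contains the initial state). The whole content of the proof is condition (ii), i.e. bounding the probability that a single transition carries a point out of the cube $H_t$ when it starts anywhere in the smaller cube $H_{t-1}$.

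For condition (ii) I would use the explicit kernel, under which the $i$-th coordinate evolves as $y_i=x_i+a_{i,t}\epsilon$ with $\epsilon\sim\mathcal{N}(0,1)$. A union bound over coordinates gives $\int_{\mathbb{R}^m\setminus H_t}Q_{t-1}(dy\mid x)=\mathbb{P}(y\notin H_t)\le\sum_{i=1}^m\mathbb{P}\big(|x_i+a_{i,t}\epsilon|>t\sqrt{2\log\lambda}\big)$. The geometric heart of the argument is that the half-width of the cubes grows by exactly $\sqrt{2\log\lambda}$ per step: any $x\in H_{t-1}$ satisfies $|x_i|\le(t-1)\sqrt{2\log\lambda}$, so escape in coordinate $i$ forces $|a_{i,t}\epsilon|>\sqrt{2\log\lambda}$, i.e. $|\epsilon|>\sqrt{2\log\lambda}/|a_{i,t}|\ge\sqrt{2\log\lambda}/\max|a_{i,t}|=:c_0$. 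This threshold is independent of both $t$ and $x$, and coordinates with $a_{i,t}=0$ never move and contribute nothing, so each summand is at most $\mathbb{P}(|\epsilon|>c_0)$.

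Finally I would invoke a standard Gaussian tail (Mills-ratio) estimate $\int_{c_0}^\infty e^{-w^2/2}\,dw\le\frac{1}{c_0}e^{-c_0^2/2}$, valid because $\lambda>1$ makes $c_0>0$. Since $c_0^2/2=\log\lambda/\max a_{i,t}^2$, this produces the exponential factor $e^{-c_0^2/2}=(1/\lambda)^{1/\max a_{i,t}^2}$ together with a factor $1/c_0\propto 1/\sqrt{\log\lambda}$; multiplying by $m$ from the union bound gives a uniform bound of the form $\text{(const)}\cdot\frac{m\max|a_{i,t}|}{\sqrt{\log\lambda}}(1/\lambda)^{1/\max a_{i,t}^2}$, which is dominated by the stated $\alpha$. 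Because this estimate depends on neither $t$ nor $x\in H_{t-1}$, condition (ii) holds simultaneously for all $t$, completing the proof.

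The hard part will be the tail-bound bookkeeping rather than any conceptual difficulty: one must check that the exponent emerges exactly as $1/\max a_{i,t}^2$ (this is precisely $c_0^2/2=\log\lambda/\max a_{i,t}^2$) and that the numerical coefficient generated by the Mills inequality is $\le\sqrt{2}$, so that the prefactor stated in $\alpha$ is indeed an upper bound; any standard tail bound produces a strictly smaller constant, so the $\sqrt{2}$ in $\alpha$ is comfortably generous. The only place requiring care is flagging the standing hypothesis $\lambda>1$, which is what guarantees $\log\lambda>0$ (so that $\gamma_{t,\lambda}$ is a genuine interval and $c_0$ is a positive, finite tail threshold).
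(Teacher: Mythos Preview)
Your proposal is correct and follows essentially the same route as the paper: both verify compactness trivially, take $w\equiv 1$ so that the second membership condition reduces to bounding $\sup_{x\in H_{t-1}}\mathbb{P}(x+A_{\cdot,t}\epsilon\notin H_t)$, apply a union bound over the $m$ coordinates, exploit the fact that the cube half-width grows by exactly $\sqrt{2\log\lambda}$ per step to reduce to the one-dimensional tail $\mathbb{P}(|\epsilon|>\sqrt{2\log\lambda}/|a_{i,t}|)$, and finish with the Mills-ratio bound (the paper's Lemma~\ref{lemma:probability inequality}). The only cosmetic difference is that the paper separates the two tails and does a case split on the sign of $a_{i,t}$ before taking the maximum over entries at the end, whereas you pass to $|\epsilon|$ and the worst-case $\max|a_{i,t}|$ immediately; the arithmetic yields the same constant $\sqrt{2}$ (the factor of $2$ from the two-sided tail combined with the $1/\sqrt{2}$ in $1/c_0$), so your remark that the stated $\alpha$ is ``comfortably generous'' is consistent with the paper's computation.
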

\begin{proof}
	Clearly $H_t$ is a compact subset of $\R^m$ $\forall t \in [T]$. Next using $w(x) \equiv 1$ that can be used in Property \ref{ass: terminal} of $\mcl M$ we show,
	\small{ \begin{equation} \label{eqn: prop 5 result}
	\sup_{x \in H_{t-1} } \int_{X/H_{t}}  Q_{t-1}(dy|x) < \alpha.
	\end{equation} } \normalsize
	Recalling $\epsilon \sim \mcl N(0,1)$,	
	\small{	\begin{align} \label{prop 5: initial inequality}
	\sup_{x \in H_{t-1} } \int_{X/H_{t}} & Q_{t-1}(dy|x) = \sup_{x \in H_{t-1} }  \mbb{P}_{\epsilon}(x + A_{\cdot, t} \epsilon \notin H_t)\\ \nonumber
	& \le \sup_{x \in H_{t-1} } \bbl\{ \mathbb{P}_{\epsilon} \left(\cup_{i \in \{1,..,m\}} \left\{  x_i + a_{i,t} \epsilon > t\sqrt{2\log(\lambda)} \right\} \right)\\ \nonumber
	& \qquad  + \mathbb{P}_{\epsilon} \left(\cup_{i \in \{1,..,m\}} \left\{  x_i + a_{i,t} \epsilon < -t\sqrt{2\log(\lambda)} \right\} \right) \bbr\} \\ \nonumber
	& \le \sum_{i=1}^{m} \bbbl\{  \sup_{x_i \in \gamma_{t-1, \lambda} } \mathbb{P}_{\epsilon} \left( x_i + a_{i,t} \epsilon > t\sqrt{2\log(\lambda)}  \right)\\ \nonumber
	& \qquad   + \sup_{x_i \in \gamma_{t-1, \lambda} } \mathbb{P}_{\epsilon} \left(  x_i + a_{i,t} \epsilon < -t\sqrt{2\log(\lambda)} \right) \bbbr\}  \nonumber
	\end{align}}
\normalsize

	 We will show,  \begin{equation}
	 \label{eqn: prop 5 prob bounds 1} \sup_{x \in \gamma_{t-1, \lambda} } \mathbb{P}_{\epsilon} \left(x+a\epsilon >t\sqrt{2\log(\lambda)} \right)\le \frac{|a|}{\sqrt{2 \log(\lambda)}} \left(\frac{1}{\lambda}\right)^{\frac{1}{a^2}}
	 \end{equation} by considering the cases $a>0$, $a<0$ and $a=0$ separately. For  $a>0$,
	\small{	
	\begin{align} \nonumber
	  & \sup_{x \in \gamma_{t-1, \lambda} }  \mathbb{P}_{\epsilon} \left(x+a\epsilon >t\sqrt{2\log(\lambda)} \right) = \sup_{x \in \gamma_{t-1, \lambda} }  \mathbb{P}_{\epsilon} \left(\epsilon >\frac{t\sqrt{2\log(\lambda)}- x}{a} \right)\\ \nonumber
	& \le \mathbb{P}_{\epsilon} \left(\epsilon >\frac{\sqrt{2\log(\lambda)}}{a} \right)
	 < \frac{a}{\sqrt{2\log(\lambda)}}\exp \left(-\frac{\log \left(\lambda \right) }{a^2}\right) \\ \nonumber
	& \le \frac{a}{\sqrt{2 \log(\lambda)}} \left(\frac{1}{\lambda}\right)^{\frac{1}{a^2}}. \nonumber
	\end{align} } \normalsize
	Where the second inequality uses Lemma \ref{lemma:probability inequality} and the last inequality follows since we can assume $\lambda>1$.
The case $a<0$ follows by a similar proof.
The case $a=0$ is trivial, $\sup_{x \in \gamma_{t-1, \lambda} }\mathbb{P}(x+a\epsilon >t\sqrt{2\log(\lambda)})=0$. A similar argument of considering the different cases of $a$ can show
\begin{equation} \label{eqn: prop 5 prob bounds 2} \sup_{x \in \gamma_{t-1, \lambda} } \mathbb{P}(x+a\epsilon <-t\sqrt{2\log(\lambda)})\le \frac{|a|}{\sqrt{2 \log(\lambda)}} \left(\frac{1}{\lambda}\right)^{\frac{1}{a^2}}.
\end{equation} Now, substituting~\eqref{eqn: prop 5 prob bounds 1} and~\eqref{eqn: prop 5 prob bounds 2} into \eqref{prop 5: initial inequality}, we get~\eqref{eqn: prop 5 result}. \end{proof}
Next we return to a general $\lambda$-smoothed MDP and approximate its state and control space's with a countable set.


\begin{defn} \label{Disc_MDP_def}
Given an approximable MDP $\mathcal{M} \in \mathbb{A}$ we can define a corresponding MDP $\tilde{\mathcal{M}}= ((\{H_t\}_{t \in [T]},X),\mathcal{A},\psi, \{Q_t\}_{t\in \mathbb{N}}, (c,\tilde{h}),T) = \Phi_2(\{H_t\}_{t \in [T]}, \Phi_1(\mathcal{M}, \lambda))$ for some compact family $\{H_t\}_{t \in [T]}$ and $\lambda>0$. Furthermore given
	\begin{itemize}
		\item  $\Gamma_t$ is a $\beta$-partition of $H_t$.
		\item $\Theta_t(x)$ is a $\eta$-partition of $\psi(x)$.
		\item $\hat Q$ defined for $x \in \Gamma_t$, $y \in \Gamma_{t+1}$ and $a \in \Theta_t(x)$ as $\hat{Q}_t(y|x,a):=Q_t(p_{H_{t+1},\Gamma_{t+1}}^{-1}(y)|x,a)$ for $t\in [T-1]$ where $p_{H_{t+1},\Gamma_{t+1}}^{-1}(y)$ is the pre-image of $y$,
	\end{itemize}
we define the map $\Phi_3: \mathbb{R}^+ \times \mathbb{R}^+ \times Im\{\Phi_2\} \to \mathbb{M}$ by $\hat{\mathcal{M}}=\Phi_3(\beta, \eta, \tilde{\mathcal{M}})$ if $\hat{\mathcal{M}}=((\{\Gamma_t\}_{t \in [T]},X), \mathcal{A},\Theta_t, \hat{Q}, (c,\tilde{h}),T)$.

	Note if $\psi(x)=\emptyset$ $\forall x \in X$ we simplify our notation and set $\eta=0$.
	\end{defn}
	\begin{defn}
		We define the total approximation map $\Psi: \mathbb{A} \times (\mathbb{R}^{+})^3 \times ( X^\cup)^T  \to \mathbb{M}$ by $\Psi(\mathcal{M},\lambda, \beta, \eta, \{H_t\}_{t \in [T]} ) = \Phi_3(\beta, \eta,  \Phi_2(\{H_t\}_{t \in [T]},\Phi_1(\mathcal{M}, \lambda)))$.
	\end{defn}
	\begin{defn}
		Let $\hat{J}_{t}$ be the OCTGF of some MDP in $Im\{\Psi\}$ with state space $(\{\Gamma_t\}_{t \in [T]} ,X)$. For any compact set $H_t$ such that $\Gamma_t \subset H_t$ we define the extended OCTGF of $\hat{J}_{t}(x)$ as follows:
		\begin{equation*}
		\hat{F}_{t}(x)= \hat{J}_{t}(p_{H_t,\Gamma_t}(x)) \quad x \in H_t
		\end{equation*}
		


\end{defn}
Next we will state a theorem that gives a bound for error of the OCTGF's of the MDP's in $Im\{\Phi_1\}$ and associated MDP's mapped under $\Phi_3$.

\begin{thm} \label{prop: error bound on K_t optimal cost to go functions}
		Consider some $\mathcal{M} \in \mathbb{A}$. For some $\lambda >0$ suppose $\tilde{J}_t$ is the OCTGF of $\tilde{\mathcal{M}} = \Phi_1(\mathcal{M}, \lambda)$. For any $\alpha, \beta, \eta>0$ there exists $\{H_t\}_{t \in [T]} \in \mbb H_{\tilde{\mcl M}, \alpha }$ such that if we denote $\hat{J}_t$ as the extended OCTGF of  $\Psi(\mathcal{M},\lambda, \beta, \eta, \{H_t\}_{t \in [T]} )$ then for $t \in [T-1]$,
	\begin{equation} \label{eqn: bound for J tilde and J hat 1}
	\sup_{x \in H_T}|\tilde{J}_{T}(x)-\hat{J}_{T}(x)| \le (L_h+L_{\tilde{h}_{\lambda}}) \beta
	\end{equation}
	
	\vspace{-0.3cm}
	\begin{align} \label{eqn: bound for J tilde and J hat 2}
	\sup_{x \in H_t}|\tilde{J}_{t}(x)-\hat{J}_{t}(x)| \le & ||\tilde{J}_{t+1}||_w \alpha + (L_{\tilde{J}_{t+1}}L_{q} +L_c) \eta\\ \nonumber
	& + \sup_{y \in H_{t+1}}|\tilde{J}_{t+1}(y)- \hat{J}_{t+1}(y)| + L_{\tilde{J}_{t}} \beta \nonumber
	\end{align}
Where the function $w:X \to \mathbb{R}$ and Lipschitz constant $L_q$ are as in Property \ref{ass: terminal} and Property \ref{property: Zeta is lipshitz} of $\mathcal{M}$ respectively.

Moreover in the case where the control space of $\mcl M$ is empty we set $\eta=0$.

\end{thm}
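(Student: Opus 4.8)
The plan is to establish the terminal bound \eqref{eqn: bound for J tilde and J hat 1} directly and then prove the recursion \eqref{eqn: bound for J tilde and J hat 2} by downward induction on $t$ from $T-1$ to $0$. Existence of a family $\{H_t\}_{t \in [T]} \in \mbb H_{\tilde{\mcl M},\alpha}$ is immediate from Lemma \ref{eps_def}. The reformulation that makes the argument clean is to rewrite the discrete Bellman sum as an integral against the extended OCTGF: since $\hat Q_t(y|x,a)=Q_t(p_{H_{t+1},\Gamma_{t+1}}^{-1}(y)|x,a)$ and the extended OCTGF is constant on each partition cell $p_{H_{t+1},\Gamma_{t+1}}^{-1}(y)$ (equalling the discrete value there), one has
\begin{equation*}
\sum_{y \in \Gamma_{t+1}}\hat J_{t+1}(y)\hat Q_t(y|x,a)=\int_{H_{t+1}}\hat J_{t+1}(y)\,Q_t(dy|x,a),
\end{equation*}
where I write $\hat J_{t+1}$ for the extended OCTGF throughout. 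Thus for a grid point $\bar x \in \Gamma_t$ the discrete value equals $\inf_{a \in \Theta_t(\bar x)}\{c(\bar x,a)+\int_{H_{t+1}}\hat J_{t+1}(y)Q_t(dy|\bar x,a)\}$, which sits on the same probability space as $\tilde J_t(\bar x)$ and differs only through (i) the restricted domain $H_{t+1}$, (ii) the replacement of $\tilde J_{t+1}$ by $\hat J_{t+1}$, and (iii) the restricted control set $\Theta_t(\bar x)\subseteq \psi(\bar x)$.

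For the base case, $\tilde J_T = h_1+\tilde h_\lambda$ is Lipschitz with constant $L_h+L_{\tilde h_\lambda}$ and $\hat J_T(x)=\tilde J_T(p_{H_T,\Gamma_T}(x))$, so since the $\beta$-partition gives $d_X(x,p_{H_T,\Gamma_T}(x))\le \beta$, \eqref{eqn: bound for J tilde and J hat 1} follows. For the inductive step I first peel off the state discretization: writing $\bar x=p_{H_t,\Gamma_t}(x)$ and using $\tilde J_t \in \mbb L_w(X)$ with Lipschitz constant $L_{\tilde J_t}$ (Lemma \ref{lem: cost to go has bounded w norm}),
\begin{equation*}
|\tilde J_t(x)-\hat J_t(x)| \le |\tilde J_t(x)-\tilde J_t(\bar x)| + |\tilde J_t(\bar x)-\hat J_t(\bar x)| \le L_{\tilde J_t}\beta + |\tilde J_t(\bar x)-\hat J_t(\bar x)|,
\end{equation*}
producing the $L_{\tilde J_t}\beta$ term and reducing the problem to the grid-point discrepancy.

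To bound $|\tilde J_t(\bar x)-\hat J_t(\bar x)|$ I fix a control $a \in \psi(\bar x)$ and compare the two inner objectives. Splitting $\int_{X_{t+1}}=\int_{H_{t+1}}+\int_{X_{t+1}\setminus H_{t+1}}$ gives
\begin{align*}
& \left| \int_{X_{t+1}}\tilde J_{t+1}(y)Q_t(dy|\bar x,a) - \int_{H_{t+1}}\hat J_{t+1}(y)Q_t(dy|\bar x,a)\right| \\
& \quad \le \sup_{y \in H_{t+1}}|\tilde J_{t+1}(y)-\hat J_{t+1}(y)| + ||\tilde J_{t+1}||_w \int_{X_{t+1}\setminus H_{t+1}}w(y)Q_t(dy|\bar x,a),
\end{align*}
using that $Q_t(\cdot|\bar x,a)$ is a probability measure and $|\tilde J_{t+1}|\le ||\tilde J_{t+1}||_w\,w$. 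Since $(\bar x,a)\in \mbb J_t$ and $\{H_t\}\in \mbb H_{\tilde{\mcl M},\alpha}$, the tail integral is at most $\alpha$, so the per-control gap is uniformly bounded by $\sup_{y\in H_{t+1}}|\tilde J_{t+1}(y)-\hat J_{t+1}(y)|+||\tilde J_{t+1}||_w\alpha$.

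Finally I pass from fixed controls to the two infima. For $\hat J_t(\bar x)-\tilde J_t(\bar x)$ I take a (near-)minimizer $a^* \in \psi(\bar x)$ of $\tilde J_t(\bar x)$, pick $\hat a \in \Theta_t(\bar x)$ with $d_A(a^*,\hat a)\le \eta$ (possible since $\Theta_t(\bar x)$ is an $\eta$-partition of $\psi(\bar x)$), and bound the objective change from $a^*$ to $\hat a$ by $L_c\eta$ (Lipschitz $c$, Property \ref{ass: terminal}) plus $L_qL_{\tilde J_{t+1}}\eta$ (Lipschitz continuity in the control of $\zeta^{\tilde{\mcl M}}_{\tilde J_{t+1},t}$, Property \ref{property: Zeta is lipshitz} applied to the $L_{\tilde J_{t+1}}$-Lipschitz map $\tilde J_{t+1}$), yielding the term $(L_{\tilde J_{t+1}}L_q+L_c)\eta$; the reverse direction is easier because $\Theta_t(\bar x)\subseteq \psi(\bar x)$, so a discrete minimizer is feasible for the continuous problem and only the per-control gap enters. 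Combining the two directions with the per-control bound and the state-discretization term gives \eqref{eqn: bound for J tilde and J hat 2}. I expect the main obstacle to be this last step: organizing the two-sided comparison of infima over the different control sets $\psi(\bar x)$ and $\Theta_t(\bar x)$, keeping the control-discretization error $\eta$, the kernel/tail error $\alpha$, and the propagated error $\sup_y|\tilde J_{t+1}-\hat J_{t+1}|$ cleanly separated, and making the near-minimizer argument rigorous via $\epsilon$-minimizers should any infimum fail to be attained.
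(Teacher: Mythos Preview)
Your proposal is correct and follows the standard argument. Note, however, that the paper does not give its own proof of this theorem: it simply writes ``See Theorem 3.4 in \cite{F_Dufour_2012}.'' Your outline is essentially the proof one finds in that reference---the key reformulation $\sum_{y\in\Gamma_{t+1}}\hat J_{t+1}(y)\hat Q_t(y|x,a)=\int_{H_{t+1}}\hat J_{t+1}(y)\,Q_t(dy|x,a)$, the splitting of $\int_{X_{t+1}}$ into $\int_{H_{t+1}}+\int_{X_{t+1}\setminus H_{t+1}}$ with the tail controlled by $\|\tilde J_{t+1}\|_w\alpha$, the state-discretization peel via $L_{\tilde J_t}\beta$, and the control-discretization step via Property~\ref{property: Zeta is lipshitz} applied to the Lipschitz function $\tilde J_{t+1}$---are exactly the ingredients Dufour and Prieto-Rumeau use. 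Your anticipated obstacle (the two-sided comparison of infima over $\psi(\bar x)$ versus $\Theta_t(\bar x)$) is handled precisely as you describe: one direction requires an $\eta$-close grid control and picks up $(L_{\tilde J_{t+1}}L_q+L_c)\eta$, while the other direction is free because $\Theta_t(\bar x)\subseteq\psi(\bar x)$; $\epsilon$-minimizers handle the case where infima are not attained, though Property~\ref{property: psi is compact for each x} together with Property~\ref{property: zeta is continuous} in fact guarantees attainment on $\psi(\bar x)$.
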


\begin{proof} See Theorem 3.4 in \cite{F_Dufour_2012}.
\end{proof}

%

\subsection{Error Bounds}
In this section we will show how Theorem \ref{prop: error bound on K_t optimal cost to go functions} can be combined with Proposition \ref{prop: J and J tilde arbitrarily close} to show that the OCTGF's of an MDP $\mathcal{M} \in \mathbb{A}$, and the approximated MDP $\hat{\mathcal{M}}= \Psi(\mathcal{M},\lambda, \beta, \eta, \{H_t\}_{t \in [T]} )$ are arbitrary close together.

\begin{thm} \label{theorem: error bound for J and J hat}
		Consider some MDP $\mathcal{M}
		\in \mathbb{A}$ with OCTGF denoted by $J_t$. For any $\alpha, \beta, \eta, \theta>0$ there exists $\{H_t\}_{t \in [T]} \in \mbb H_{\tilde{M},\alpha}$ and $\Lambda>0$ such that for all $\lambda>\Lambda$ and any $t \in [T-1]$,

		\vspace{-0.2cm}
		\small{
	\begin{align} \label{eqn: main bound}
	&\sup_{x \in H_t}|J_{t}(x)-\hat{J}_{t}(x)|
	\le \theta \left\{||J_T||_w+ ||\tilde{J}_{T}||_w \right\}    + \alpha \left\{ \sum_{i=1}^{T-t}||\tilde{J}_{t+i}||_w \right\} \\ \nonumber
	& +  \eta \left\{ L_{q}\left(\sum_{i=1}^{T-t}L_{\tilde{J}_{t+i}} +(T-t)L_c\right) \right\} + \beta \left\{ L_h+L_{\tilde{h}_{\lambda}} + \sum_{i=1}^{T-t}L_{\tilde{J}_{t+i -1}} \right\} \nonumber
	\end{align}}
\normalsize
	Where $\tilde{J}_{t}$ and $h +\tilde{h}_{\lambda}$ is the OCTGF and terminal cost function of $\Phi_1(\mathcal{M}, \lambda)$ respectively. $\hat{J}_{t}$ is the extended OCTGF of $\Psi(\mathcal{M},\lambda, \beta, \eta, \{H_t\}_{t \in [T]} )$. The function $w:X \to \mathbb{R}$ and constant $L_q$ are as in Properties \ref{ass: terminal} and \ref{property: Zeta is lipshitz} of $\mathcal{M}$ respectively.
	
Moreover in the case where the control space of $\mcl M$ is empty we set $\eta=0$.	

\end{thm}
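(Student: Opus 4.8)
The plan is to split the total error through the intermediate $\lambda$-smoothed MDP $\tilde{\mcl M} = \Phi_1(\mcl M, \lambda)$, whose OCTGF is $\tilde J_t$. For each $t \in [T-1]$ and $x \in H_t$ I would write
\begin{equation*}
|J_t(x) - \hat J_t(x)| \le |J_t(x) - \tilde J_t(x)| + |\tilde J_t(x) - \hat J_t(x)|,
\end{equation*}
so that the smoothing error and the discretization error can be controlled separately. This is the natural decomposition because $\hat J_t$ is constructed from $\tilde{\mcl M}$ rather than directly from $\mcl M$, and the two preceding results each bound exactly one of the two halves.

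For the first term I would invoke Proposition \ref{prop: J and J tilde arbitrarily close}: given $\theta > 0$ it produces $\Lambda > 0$ and a weight $w$ with $\|J_t\|_w < \infty$ and $\tilde J_t \in \mbb{L}_w(X)$ such that $\sup_{x \in X}|\tilde J_t(x) - J_t(x)| < (\|J_T\|_w + \|\tilde J_T\|_w)\theta$ for all $\lambda > \Lambda$. This is exactly the $\theta$ contribution appearing in \eqref{eqn: main bound}, and since it is uniform over $X$ it restricts to $x \in H_t$ without loss.

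For the second term I would fix $\lambda > \Lambda$, apply Lemma \ref{eps_def} to ensure $\mbb H_{\tilde{\mcl M}, \alpha} \ne \emptyset$, and then apply Theorem \ref{prop: error bound on K_t optimal cost to go functions} to obtain $\{H_t\}_{t \in [T]} \in \mbb H_{\tilde{\mcl M}, \alpha}$ satisfying the base estimate \eqref{eqn: bound for J tilde and J hat 1} and the recursion \eqref{eqn: bound for J tilde and J hat 2}. Writing $e_t := \sup_{x \in H_t}|\tilde J_t(x) - \hat J_t(x)|$, the recursion reads $e_t \le e_{t+1} + \|\tilde J_{t+1}\|_w \alpha + (L_{\tilde J_{t+1}}L_q + L_c)\eta + L_{\tilde J_t}\beta$, with terminal value $e_T \le (L_h + L_{\tilde h_\lambda})\beta$. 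I would then unroll this downward from $T$ to $t$, so that $e_t \le e_T + \sum_{s=t}^{T-1}\bigl(\|\tilde J_{s+1}\|_w \alpha + (L_{\tilde J_{s+1}}L_q + L_c)\eta + L_{\tilde J_s}\beta\bigr)$, and reindex by $i = s - t + 1$; this telescoping collects the $\alpha$-, $\eta$-, and $\beta$-coefficients into the last three braces of \eqref{eqn: main bound}. Adding the smoothing bound from the previous step completes the estimate.

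The main obstacle I anticipate is the coordination of the quantifiers rather than any individual inequality. The family $\{H_t\}$ delivered by Theorem \ref{prop: error bound on K_t optimal cost to go functions} lives in $\mbb H_{\tilde{\mcl M}, \alpha}$, which depends on $\tilde{\mcl M} = \Phi_1(\mcl M, \lambda)$ and hence on $\lambda$, whereas $\lambda$ must exceed the $\Lambda$ supplied by Proposition \ref{prop: J and J tilde arbitrarily close}. I would therefore fix $\Lambda$ first from the smoothing step and only afterwards select $\{H_t\}$ for the chosen $\lambda$, so that both bounds hold simultaneously; the remainder is the routine algebraic bookkeeping of the telescoped recursion, grouping terms by $\theta, \alpha, \eta, \beta$, with the empty-control case handled by setting $\eta = 0$.
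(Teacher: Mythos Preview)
Your proposal is correct and follows essentially the same approach as the paper: the paper also splits via the triangle inequality through $\tilde J_t$, invokes Proposition~\ref{prop: J and J tilde arbitrarily close} for the smoothing term, and unrolls the recursion \eqref{eqn: bound for J tilde and J hat 1}--\eqref{eqn: bound for J tilde and J hat 2} from Theorem~\ref{prop: error bound on K_t optimal cost to go functions} for the discretization term. Your discussion of the quantifier ordering and the explicit telescoping are in fact more detailed than the paper's own sketch.
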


\begin{proof}
	By the triangle inequality,
	\begin{align} \label{eqn: initial inequality for J and J hat}
	|J_{t}(x)-\hat{J}_{t}(x)| \le |J_{t}(x)-\tilde{J}_{t}(x)|+|\tilde{J}_{t}(x)-\hat{J}_{t}(x)|
	\end{align}
	We then use Proposition \ref{prop: J and J tilde arbitrarily close} to bound $|J_{t}(x)-\tilde{J}_{t}(x)|$. Then we recursively solve \eqref{eqn: bound for J tilde and J hat 1} and \eqref{eqn: bound for J tilde and J hat 2} in Theorem \ref{prop: J and J tilde arbitrarily close} to bound $|\tilde{J}_{t}(x)-\hat{J}_{t}(x)|$. Substituting these bounds into \eqref{eqn: initial inequality for J and J hat} the result~\eqref{eqn: main bound} follows.
\end{proof}

 We now specialize Theorem \ref{theorem: error bound for J and J hat} to the MDP with tuple elements defined \eqref{1} to \eqref{6}. Since in this specific case the control space, $\mcl A$, is empty we can set $\eta=0$ in Theorem \ref{theorem: error bound for J and J hat}.


\begin{cor} \label{cor:error bound for polytope DP}
	Consider the MDP with tuple elements defined \eqref{1} to \eqref{6} by $\mathcal{M}$
	. For any $\lambda> 1$ and $ \beta>0$ let  $H_t=[-t\sqrt{2\log(\lambda)},t\sqrt{2\log(\lambda)}]^m$ then the extended OCTGF, $\hat{J}_t$, of $\Psi(\mathcal{M},\lambda, \beta, 0, \{H_t\}_{t \in [T]} )$ satisfies,
		\begin{align} \label{eqn: error bound for poltopic integration}
		& \left|\int_{{x} \in \mathcal{P}}\phi( {x}) d {x} -    \hat{J}_{0}(x_0) \right|
		  \le \frac{2m}{\min_{i\in[m]} \{|a_{i,T}|\} \lambda}\\ \nonumber
		  & \hspace{1.7cm} + \frac{\sqrt{2}m T \max|a_{i,t}|}{\sqrt{ \log(\lambda)}} (\frac{1}{\lambda})^{\frac{1}{\max a_{i,t}^2}} + 2  m \lambda \beta (T+1).\nonumber
		\end{align}
	where $x_0=(0,..,0)$, $\mathcal{P}=\{ {x} \in \mathbb{R}^T : A  {x} < b\}$ and $A \in \R^{m \times T}$ and $b \in \R^m$.
\end{cor}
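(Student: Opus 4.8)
The plan is to derive \eqref{eqn: error bound for poltopic integration} by specializing the general estimate \eqref{eqn: main bound} of Theorem~\ref{theorem: error bound for J and J hat} to the concrete MDP $\mathcal{M}$ with tuple elements \eqref{1} to \eqref{6}, evaluated at $t=0$ with $\eta=0$. The starting point is the identification of the left-hand side: by Proposition~\ref{prop: MDP = polytope integral} (together with Corollary~\ref{cor: showing poltope evaluation problem is an MDP}), the optimal cost $J_0(x_0)$ at $x_0=(0,\dots,0)$ equals $\int_{x\in\mathcal{P}}\phi(x)\,dx$, so the quantity to be bounded is exactly $|J_0(x_0)-\hat{J}_0(x_0)|$. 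Moreover, for the chosen sets $H_t=[-t\sqrt{2\log\lambda},\,t\sqrt{2\log\lambda}]^m$ we have $H_0=\{x_0\}$, so the supremum over $H_0$ in \eqref{eqn: main bound} collapses to evaluation at $x_0$.

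The core of the argument is then to read off every constant in \eqref{eqn: main bound} from the results already established for this MDP. From the proof of Proposition~\ref{prop: proves our polytope MDP is readily-aproximable} I would use $w\equiv 1$, $L_\psi=0$, $L_c=0$ and $L_q=1$; and from its Property~\ref{ass: terminal} data, $h_1\equiv 0$ (hence $L_h=0$), $f_1\equiv 1$, $f_2\equiv 0$, so that $\max\{||f_1||_\infty,||f_2||_\infty\}=1$ and $L_{f_1}=L_{f_2}=0$. Substituting this into Corollary~\ref{cor: h tilde Lipschitz.} yields $L_{\tilde{h}_\lambda}=2\lambda m$, and feeding the same data into the recursion of Corollary~\ref{cor: explicit lipschitz constant for J tilde} collapses it to $L_{\tilde{J}_t}=2\lambda m$ for every $t\in[T]$. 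Finally, because the smoothed terminal cost $h_1+\tilde{h}_\lambda=g_{\lambda,b}$ takes values in $[0,1]$ and there is no running cost, each OCTGF obeys $0\le\tilde{J}_t\le 1$, so with $w\equiv 1$ we get $||\tilde{J}_t||_w\le 1$ and $||J_T||_w\le 1$.

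It remains to pin down the tolerances $\theta$ and $\alpha$ by the sharp, MDP-specific estimates rather than the abstract existence clauses. For $\theta$, Lemma~\ref{lem: Bound_of_transition_to_bad_region} furnishes the explicit bound $m/(\min_i|a_{i,T}|\,\lambda)$ on the mass transported into $\mathcal{R}_{\lambda,b}$, valid for all $\lambda>0$; running the estimate of Proposition~\ref{prop: J and J tilde arbitrarily close} with this value gives $\sup_x|\tilde{J}_t-J_t|\le(||J_T||_w+||\tilde{J}_T||_w)\,m/(\min_i|a_{i,T}|\,\lambda)\le 2m/(\min_i|a_{i,T}|\,\lambda)$. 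For $\alpha$ and the sets $\{H_t\}$, Proposition~\ref{prop:K_t lemma} shows the stated $H_t$ lie in $\mathbb{H}_{\Phi_1(\mathcal{M},\lambda),\alpha}$ with $\alpha=\tfrac{\sqrt{2}m\max|a_{i,t}|}{\sqrt{\log\lambda}}(1/\lambda)^{1/\max a_{i,t}^2}$, which requires $\lambda>1$.

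Substituting these values into \eqref{eqn: main bound} at $t=0$, the $\theta$-term gives $2m/(\min_i|a_{i,T}|\,\lambda)$, the $\alpha$-term gives $\alpha\sum_{i=1}^{T}||\tilde{J}_i||_w\le\alpha T=\tfrac{\sqrt{2}mT\max|a_{i,t}|}{\sqrt{\log\lambda}}(1/\lambda)^{1/\max a_{i,t}^2}$, and the $\beta$-term gives $\beta(L_h+L_{\tilde{h}_\lambda}+\sum_{i=1}^{T}L_{\tilde{J}_{i-1}})=\beta(2\lambda m+2\lambda m T)=2m\lambda\beta(T+1)$, which together reproduce the three summands of \eqref{eqn: error bound for poltopic integration}. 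I do not anticipate a real obstacle; the only delicate point is justifying that the abstract $\theta$ and $\alpha$ (and the threshold $\Lambda$ hidden in Theorem~\ref{theorem: error bound for J and J hat} and Proposition~\ref{prop: J and J tilde arbitrarily close}) may be replaced by the explicit $\lambda$-dependent values from Lemma~\ref{lem: Bound_of_transition_to_bad_region} and Proposition~\ref{prop:K_t lemma}, so that the final bound holds uniformly for all $\lambda>1$, and in checking that the uniform bound $||\tilde{J}_t||_w\le 1$ prevents the growing factor $\lambda$ from contaminating the $\theta$- and $\alpha$-terms.
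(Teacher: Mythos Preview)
Your proposal is correct and follows essentially the same route as the paper's own proof: identify $J_0(x_0)$ with the integral via Proposition~\ref{prop: MDP = polytope integral}, compute the Lipschitz constants through Corollaries~\ref{cor: h tilde Lipschitz.} and~\ref{cor: explicit lipschitz constant for J tilde}, fix $\alpha$ and $\{H_t\}$ by Proposition~\ref{prop:K_t lemma}, choose $\theta$ via Lemma~\ref{lem: Bound_of_transition_to_bad_region}, bound $\|\tilde J_t\|_w\le 1$, and then specialize Theorem~\ref{theorem: error bound for J and J hat} at $t=0$, $\eta=0$. In fact you spell out the substitutions more carefully than the paper does, and your flagged ``delicate point'' (that the explicit $\lambda$-dependent bounds from Lemma~\ref{lem: Bound_of_transition_to_bad_region} and Proposition~\ref{prop:K_t lemma} replace the abstract $\theta,\alpha,\Lambda$ of Theorem~\ref{theorem: error bound for J and J hat}) is exactly what the paper glosses over in a single clause.
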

\begin{proof}
	By Proposition \ref{prop: proves our polytope MDP is readily-aproximable} $\mathcal{M} \in \mathbb{A}$, thus $\hat{\mathcal{M}}$ is well defined. Let us denote the OCTGF of the MDP's $\mathcal{M}$ and $\mcl{\tilde{M}}=\Phi_1(\mcl M, \lambda)$ by $J_{t}$ and $\tilde{J}_t$. By Proposition \ref{prop: MDP = polytope integral} $J_0(x_0)=\int_{\mathbf{x} \in \mathcal{P}}\phi(\mathbf{x}) d\mathbf{x}$. Using Corollary \ref{cor: explicit lipschitz constant for J tilde} and Corollary \ref{cor: h tilde Lipschitz.} $L_{\tilde{J}_t}$ and $L_{\tilde{h}}$ can be calculated. Proposition \ref{prop:K_t lemma} shows $H_t \in \mbb{H}_{\mcl{\tilde{M}}, \alpha}$ where $\alpha=\frac{\sqrt{2}m \max|a_{i,t}|}{\sqrt{ \log(\lambda)}} (\frac{1}{\lambda})^{\frac{1}{\max a_{i,t}^2}}$. Now Theorem \ref{theorem: error bound for J and J hat} can be applied to the specific MDP $\mathcal{M}$, where Lemma \ref{lem: Bound_of_transition_to_bad_region} is used to select an appropriate $\theta$; and using induction and \eqref{bellman} it can be shown $||\tilde{J}_t||_w \le 1$ $\forall t \in [T]$. \end{proof}

\section{Numerical Results} \label{Section 5: Numerical Results}


Using the smoothing and discretization procedure laid out in section \ref{section 4: Approximating MDP's} we find approximate solutions to the optimization problem associated with the MDP with tuple elements defined \eqref{1} to \eqref{6}. The algorithm recursively solves \eqref{bellman} for different discretization parameters, $(\lambda, \beta)$. In all simulations $(\lambda, \beta)$ were parametrized by $n \in \mbb N$; we selected $\lambda=\sqrt{n}$ and $\beta= \frac{1}{n}$. Figure \ref{fig: error} shows the results of computing the probability that a two dimensional Gaussian variable is in the positive orthant; this can be written as an integral of the form \eqref{eqn: polytope integral} where $A =[1, 1]^T$ and $b=[0,0]^T$. Using \eqref{eqn: error bound for poltopic integration} it can be shown the error bound for integration over the positive orthant is $E=\frac{28}{\sqrt{n}} + \frac{4}{ \sqrt{n\log{n}}}$, which is of order $\mcl O \left(\frac{1}{\sqrt{n}} \right)$. The order of the actual error of the algorithm, when compared to the true value of 0.5, seems to also be $\mcl O \left(\frac{1}{\sqrt{n}} \right)$; indicating our error bounds are tight in some cases.

In Figure \ref{fig: A_0_5__0_7__1_5__0_9___0_2__0_7__0_5_1_3_B_2___0_5} we evaluate an integral of the Form \eqref{eqn: polytope integral} where $A=\begin{bmatrix}
& 0.5 & 0.7 & 1 & 0.9\\
& 0.2 & 0.7 & 0.5 & 1
\end{bmatrix}$ and $b= \begin{bmatrix}
& 2\\
& 0.5
\end{bmatrix}$. The horizontal line represents the Monte Carlo approximation of $10^8$ samples. The curved line represents the OCTGF, $\hat{J}_{0}(0)$, of the approximated MDP with tuple elements \eqref{1} to \eqref{6}.

\begin{figure}[h]
	\centering
	\includegraphics[width=0.5\textwidth]{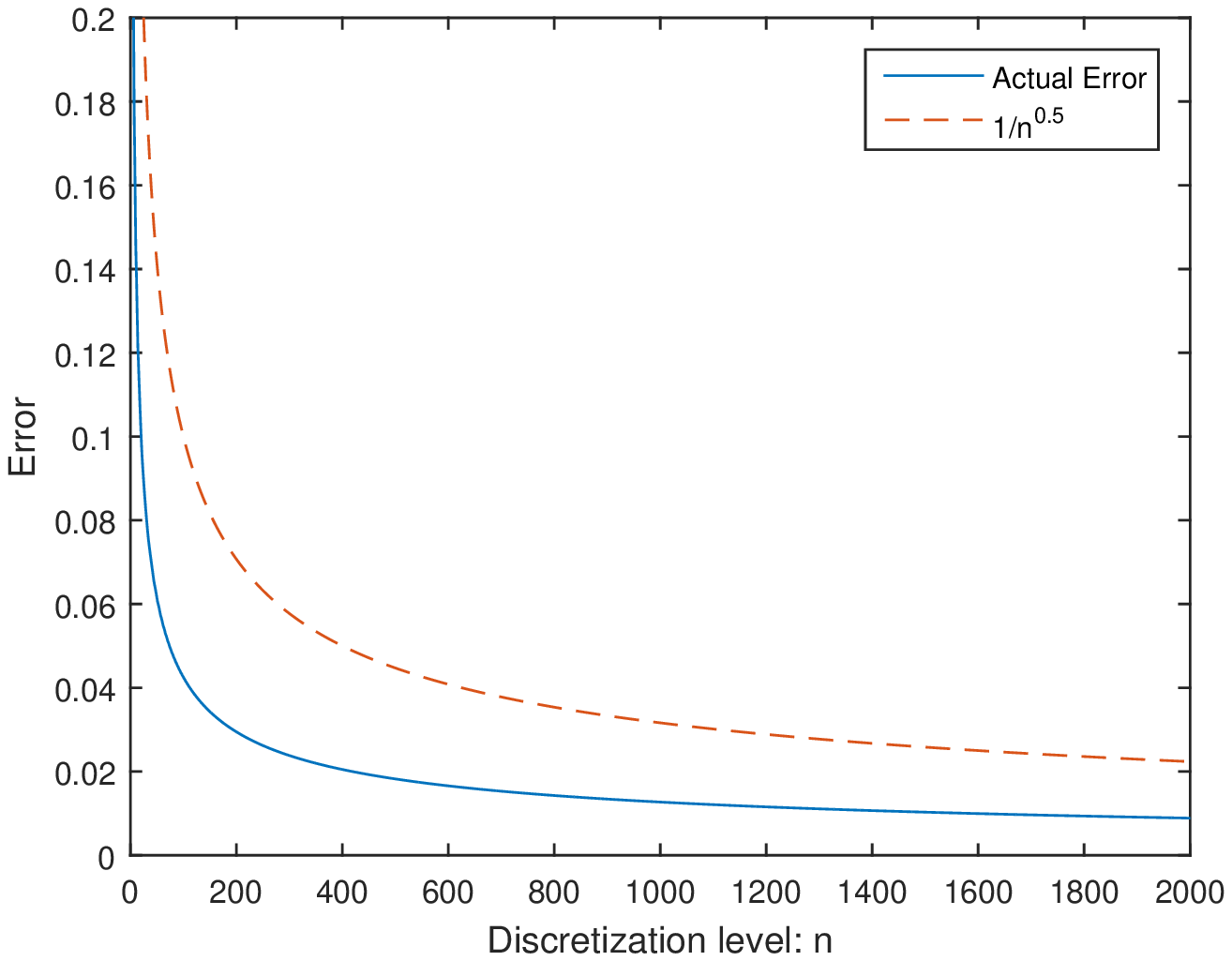}
	\caption{}
	\label{fig: error}
\end{figure}

\begin{figure}[h]
	\centering
	\includegraphics[width=0.5\textwidth]{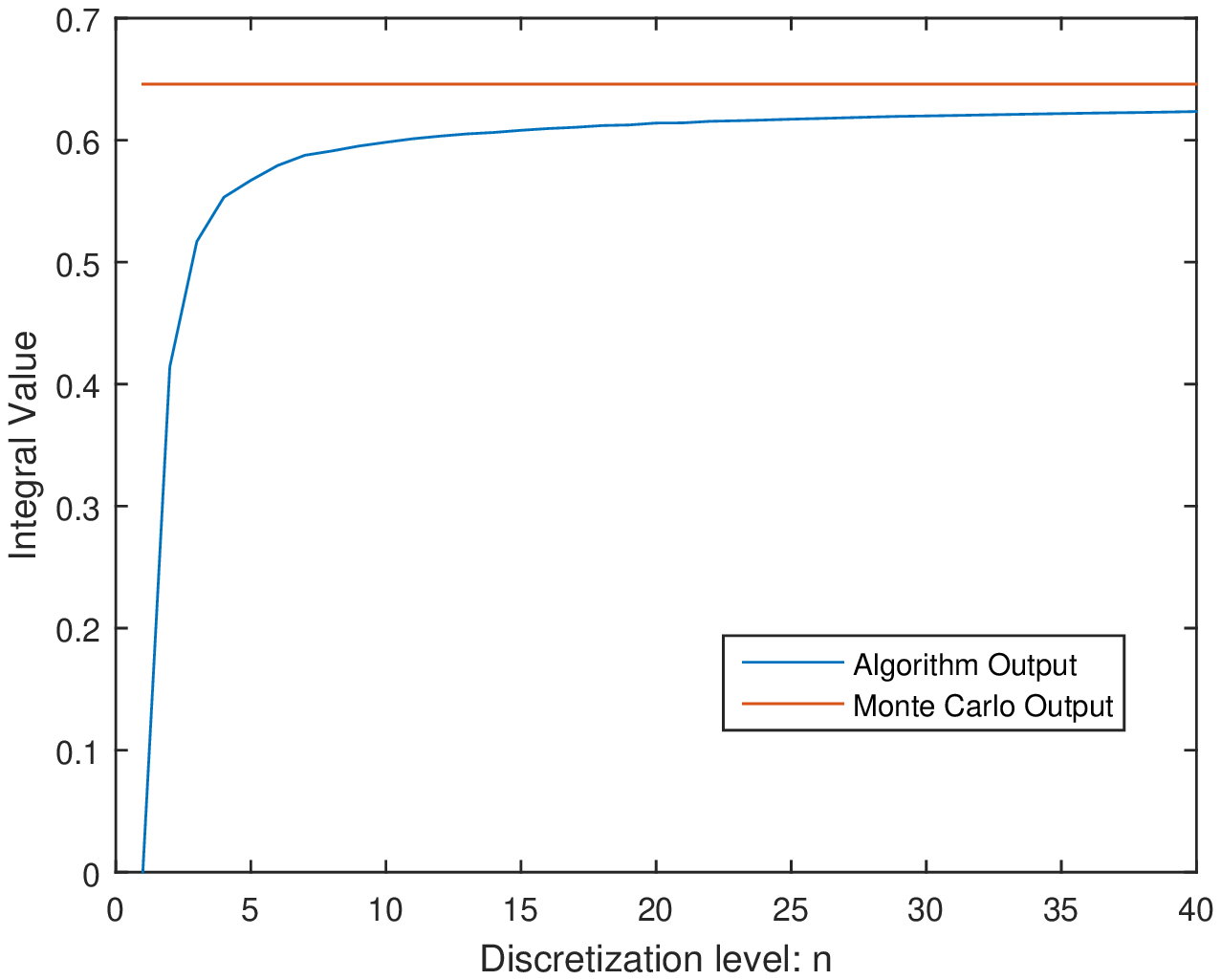}
	\caption{}
	\label{fig: A_0_5__0_7__1_5__0_9___0_2__0_7__0_5_1_3_B_2___0_5}
\end{figure}

\section{Conclusion} \label{conclusion}

In this paper we showed that given a multivariate Gaussian integral over a polytope it is possible to construct an MDP such that the solution of the MDP's associated optimization problem is equal to the integral. In general this class of MDP's have non-compact uncountable state spaces and discontinuous terminal cost functions. In this paper we use Bellman's equation to solve the associated optimization problem. However in general there is no analytical solution to Bellman's equation for MDP's of this class and thus an approximation is required. We proposed an approximation scheme that maps our class of MDP's to a much simpler class of MDP's with countable state and control spaces. Moreover we derived bounds on the supremum norm error of the optimal cost to go functions of the MDP and the mapped MDP. The main contribution of this paper is thus a dynamic programing based algorithm for evaluating multivariate Gaussian integration over polytopes with a priori error bounds.

Our numerical results presented in section \ref{Section 5: Numerical Results} are consistent our error bounds in section \ref{section 4: Approximating MDP's}. There are substantial computational costs to this dynamic programing approach but using this approach we are able to compute the integral to any degree of accuracy. This paper links computing multivariate Gaussian integration over polytopes to dynamic programing; a well developed computational technique. 
\section{Appendix}

\begin{lem} \label{lemma:probability inequality}
	For $t >0$ $\mathbb{P}_{\epsilon}(\epsilon>t) \le \frac{1}{t} \exp(\frac{-t^2}{2})$ where $\epsilon \sim \mathcal{N}(0,1)$.
\end{lem}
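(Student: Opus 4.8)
The plan is to establish this as a standard Gaussian tail estimate by writing the probability as an integral and exploiting a simple monotonicity trick that makes the integrand exactly integrable. First I would express the probability in terms of the standard normal density, namely
\begin{equation*}
\mbb P_{\eps}(\eps > t) = \int_{t}^{\infty} \frac{1}{\sqrt{2\pi}} \exp\!\left(\frac{-s^2}{2}\right) ds .
\end{equation*}
The goal is then to bound this integral from above by $\frac{1}{t}\exp(-t^2/2)$.

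The key observation is that for every $s$ in the range of integration we have $s \ge t > 0$, and hence $1 \le s/t$. Multiplying the integrand by this factor only increases it, so
\begin{equation*}
\int_{t}^{\infty} \frac{1}{\sqrt{2\pi}} \exp\!\left(\frac{-s^2}{2}\right) ds \le \frac{1}{t}\int_{t}^{\infty} \frac{s}{\sqrt{2\pi}} \exp\!\left(\frac{-s^2}{2}\right) ds .
\end{equation*}
The point of introducing the factor $s$ is that $s\,\exp(-s^2/2)$ now has the elementary antiderivative $-\exp(-s^2/2)$, so the right-hand integral can be evaluated exactly, yielding $\frac{1}{t\sqrt{2\pi}}\exp(-t^2/2)$.

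Finally I would discard the constant $\frac{1}{\sqrt{2\pi}}$, which is harmless since $\frac{1}{\sqrt{2\pi}} < 1$ and the bound we are asked to prove is correspondingly looser; this gives $\mbb P_{\eps}(\eps > t) \le \frac{1}{t}\exp(-t^2/2)$, as claimed. I do not anticipate any genuine obstacle here: the only idea required is the replacement of $1$ by $s/t$ to render the integrand a perfect derivative, and the remaining steps are a direct evaluation and a trivial comparison of constants.
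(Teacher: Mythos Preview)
Your proof is correct and follows essentially the same approach as the paper: express the tail probability as an integral, replace $1$ by $s/t$ on the domain $s\ge t$ to obtain an explicit antiderivative, and use $\frac{1}{\sqrt{2\pi}}<1$ to absorb the normalization constant. The only cosmetic difference is that the paper drops the $\frac{1}{\sqrt{2\pi}}$ factor at the same step where it inserts $s/t$, whereas you do these in two separate steps.
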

\begin{proof}
	\vspace{-0.8cm}
	\begin{align*}
	\mathbb{P}_{\epsilon} \left(\epsilon>t \right)= & \int_{t}^{\infty}\frac{1}{\sqrt{2 \pi}} \exp \left(\frac{-x^2}{2} \right) dx\\
	< & \int_{t}^{\infty} \frac{x}{t}  \exp \left(\frac{-x^2}{2} \right) dx \\
	= & \frac{1}{t} \exp \left(\frac{-t^2}{2} \right)
	\end{align*}
	Where the second inequality uses the fact that inside the integral domain $x \ge t$ and $\frac{1}{\sqrt{2 \pi}}<1$.
\end{proof}

\bibliographystyle{ieeetr}
\bibliography{bibliography_Extension_non_lip}

\end{document}